\documentclass[12pt,a4]{amsart}
\usepackage{amsmath}
\usepackage{amssymb}
\usepackage{amsthm}
\usepackage{enumitem}
\usepackage{url}
\usepackage{soul}
\usepackage{times}
\usepackage[T1]{fontenc}

\usepackage{color}
\usepackage{dsfont}
\usepackage{epsfig}
\usepackage[hidelinks]{hyperref}
\usepackage{setspace}
\usepackage{epstopdf}
\usepackage[authoryear,round,sort]{natbib}
\usepackage{comment}
\usepackage{booktabs}
\usepackage{float}
\usepackage[linesnumbered,ruled,vlined]{algorithm2e}
\SetKwInput{KwInput}{Input}                
\SetKwInput{KwOutput}{Output} 
\usepackage{stmaryrd}
\usepackage{subcaption,graphicx}
\usepackage{float}
\numberwithin{equation}{section}

\usepackage[font=small,labelfont=bf]{caption}
\captionsetup[table]{skip=20pt}
\DeclareCaptionFont{tiny}{\tiny}
\setlength\belowcaptionskip{5pt}
\setcounter{MaxMatrixCols}{10}


\usepackage{geometry}
\geometry{headsep=15pt}
\setlength{\oddsidemargin}{20mm} \setlength{\evensidemargin}{20mm}
\setlength{\voffset}{-1in} \setlength{\hoffset}{-1in}
\setlength{\textwidth}{175mm} \setlength{\topmargin}{0mm}
\setlength{\headheight}{15mm}
\setlength{\headsep}{10mm}
\setlength{\topskip}{0mm}
\setlength{\textheight}{225mm}
\theoremstyle{plain}
\newtheorem{proposition}{Proposition}[section]

\newtheorem{theorem}{Theorem}[section]
\newtheorem{lemma}{Lemma}[section]
\newtheorem{corollary}{Corollary}[section]

\theoremstyle{definition}
\newtheorem{definition}{Definition}[section]

\theoremstyle{remark}
\newtheorem{rk}{Remark}[section]
\expandafter\let\expandafter\oldproof\csname\string\proof\endcsname
\let\oldendproof\endproof
\renewenvironment{proof}[1][\proofname]{%
  \oldproof[\noindent\textbf{#1.} ]%
}{\oldendproof}

\newcommand{\mP}{\mathbb{P}}
\newcommand{\be}{\begin{equation}}
\newcommand{\ee}{\end{equation}}
\newcommand{\by}{\begin{eqnarray*}}
\newcommand{\ey}{\end{eqnarray*}}

\renewcommand{\leq}{\leqslant}
\renewcommand{\geq}{\geqslant}
\usepackage{xcolor}
\definecolor{dark-red}{rgb}{0.4,0.15,0.15}
\definecolor{dark-blue}{rgb}{0.15,0.15,0.4}
\definecolor{medium-blue}{rgb}{0,0,0.5}
\hypersetup{
    colorlinks, linkcolor={red},
    citecolor={blue}, urlcolor={blue}
}
\allowdisplaybreaks

\begin{document}
\title{An improved variant of simulated annealing that converges under fast cooling}
\author{Michael C.H. Choi}
\address{School of Data Science, The Chinese University of Hong Kong, Shenzhen, Guangdong, 518172, P.R. China and Shenzhen Institute of Artificial Intelligence and Robotics for Society}
\email{michaelchoi@cuhk.edu.cn}
\date{\today}
\maketitle

\begin{abstract}
	Given a target function $U$ to minimize on a finite state space $\mathcal{X}$, a proposal chain with generator $Q$ and a cooling schedule $T(t)$ that depends on time $t$, in this paper we study two types of simulated annealing (SA) algorithms with generators $M_{1,t}(Q,U,T(t))$ and $M_{2,t}(Q,U,T(t))$ respectively. While $M_{1,t}$ is the classical SA algorithm, we introduce a simple and improved variant that we call $M_{2,t}$ which provably converges faster. When $T(t) > c_{M_2}/\log(t+1)$ follows the logarithmic cooling schedule, our proposed algorithm is strongly ergodic both in total variation and in relative entropy, and converges to the set of global minima, where $c_{M_2}$ is a constant that we explicitly identify. If $c_{M_1}$ is the optimal hill-climbing constant that appears in logarithmic cooling of $M_{1,t}$, we show that $c_{M_1} \geq c_{M_2}$ and give simple conditions under which $c_{M_1} > c_{M_2}$. Our proposed $M_{2,t}$ thus converges under a faster logarithmic cooling in this regime. The other situation that we investigate corresponds to $c_{M_1} > c_{M_2} = 0$, where we give a class of fast and non-logarithmic cooling schedule that works for $M_{2,t}$ (but not for $M_{1,t}$). In addition to these asymptotic convergence results, we compare and analyze finite-time behaviour between these two annealing algorithms as well. Finally, we present two algorithms to simulate $M_{2,t}$.
	\smallskip
	
	\noindent \textbf{AMS 2010 subject classifications}: 60J27, 60J28
	
	\noindent \textbf{Keywords}: Simulated annealing; non-homogeneous Markov chains; strong ergodicity; relative entropy; spectral gap
\end{abstract}

\tableofcontents

\allowdisplaybreaks

\section{Introduction}

Given a target function $U$ to minimize on a finite state space $\mathcal{X}$, simulated annealing is a popular stochastic optimization algorithm that has found extensive empirical success in diverse disciplines ranging from image processing to statistics and combinatorial optimization problems, see for example \cite{K84, KGv83,BT93,GG84,C04} and the references therein. The algorithm constructs a non-homogeneous continuous-time Markov chain $X^{M_1} = (X^{M_1}_t)_{t \geq 0}$ whose generator depends on the proposal chain as well as the so-called cooling schedule $T(t)$. Roughly speaking, simulated annealing can be considered as a non-homogeneous version of the Metropolis-Hastings algorithm in which the acceptance probability depends on $T(t)$. The cooling schedule $T(t)$ is carefully designed so that in the long run $X^{M_1}$ converges to the set of global minima of $U$. On one hand, $T(t)$ cannot converge too slowly as it is impractical to run $X^{M_1}$ for a long period of time; on the other hand however, $T(t)$ cannot converge too fast as there are well-documented instances in which $X^{M_1}$ may get stuck at a local minimum. \cite{HS88,Gidas85} proved that the optimal cooling schedule for $X^{M_1}$ (in the sense of weak or strong ergodicity) is of the form
$$T(t) = \dfrac{c_{M_1}}{\log(t+1)},$$
where $c_{M_1}$ is the hill-climbing constant to be introduced in \eqref{eq:cm1} below. In practice however, people adapt fast cooling schedule even though they usually do not come along with convergence guarantee.

In this paper, inspired by the recent work of the author \cite{Choi16, CH18} who studied a new variant of the Metropolis-Hastings algorithm, we propose a promising variant of simulated annealing $X^{M_2} = (X^{M_2}_t)_{t \geq 0}$ that enjoys superior mixing properties, provably converges under fast cooling and in some cases does not suffer from the drawbacks of $X^{M_1}$ as mentioned above. Precisely, our contributions are the following:

\begin{enumerate}
	\item \textbf{Derive basic yet important properties of $X^{M_2}$}: We prove a few elementary properties and compare them with their $X^{M_1}$ counterparts in Lemma \ref{lem:M1M2}. These simple results turn out to be crucial in proving our main results. In particular, the difference in the convergence behaviour between $X^{M_1}$ and $X^{M_2}$ stems from the difference between their quadratic forms. The spectral gap lower bound of $X^{M_2}$ presented in Lemma \ref{lem:spectralgaplower} is also of independent interest.
	
	
	\item \textbf{Strong ergodicity of $X^{M_2}$ under fast cooling}: We split our ergodicity results into two regime according to $c_{M_2} > 0$ or $c_{M_2} \leq 0$, where $c_{M_2}$ is a constant that we explicitly identify in \eqref{eq:cm2} below and depends on $U$ and $Q$. In the first case when $c_{M_2} > 0$ in Theorem \ref{thm:stronge}, we establish rigorous convergence guarantee (in total variation and in relative entropy) of $X^{M_2}$ when the cooling schedule $T(t)$ is of the form
	$$T(t) = \dfrac{c_{M_2} + \epsilon}{\log(t+1)},$$
	for all $\epsilon > 0$. Thus the speed-up of the proposed variant depends on the difference $c_{M_1} - c_{M_2}$.
	
	In the second situation when $c_{M_2} \leq 0$ in Theorem \ref{thm:stronge2} and Theorem \ref{thm:relativee2}, we give a class of cooling schedule that is faster than logarithmic cooling and prove the strong ergodicity of $X^{M_2}$ in such setting. Note that in a continuous time and state space setting, \cite{FQG97} proposes an improved variant of the Langevin diffusion, where the cooling schedule is accelerated in a similar spirit as in this paper with a smaller hill-climbing parameter.
	
	
	\item \textbf{Finite-time convergence estimate of $X^{M_2}$}: In addition to asymptotic convergence theorems, we prove a finite-time convergence estimate of $X^{M_2}$ in Corollary \ref{thm:finite}, as a simple application of the general result in \cite{DM94} for non-homogeneous Markov chains.
	
	\item \textbf{$X^{M_2}$ effectively escapes local minima under general cooling}: It is well-known that $X^{M_1}$ may get trapped at a local minimum when one adapts fast cooling. As a result, one may have similar concern for $X^{M_2}$. In Theorem \ref{thm:escape}, we prove that such concern is not valid and $X^{M_2}$ effectively escapes local minima with probability tends to $1$ as time goes to infinity under general cooling. In other words, $X^{M_2}$ can still effectively explore the state space even under poor initialization and fast cooling.
	
	\item \textbf{Propose two algorithms to simulate $X^{M_2}$}: While $X^{M_1}$ can be efficiently simulated using the idea of acceptance-rejection, it seems difficult to adapt such procedure to our proposed variant $X^{M_2}$. We propose two algorithms to simulate $X^{M_2}$ in Algorithm \ref{algo:M21} and Algorithm \ref{algo:M2}.
	
	\item \textbf{Discuss possible limitations in deploying $X^{M_2}$}: While $X^{M_2}$ enjoys some desirable theoretical properties, it may take extra computational costs to deploy it in practice when compared with classical $X^{M_1}$.
\end{enumerate}

The rest of this paper is organized as follow. In Section \ref{sec:prelim}, we fix our notations and introduce the classical simulated annealing $X^{M_1}$ as well as the proposed variant $X^{M_2}$. This is then followed by a quick review on various notions of ergodicity for non-homogeneous Markov chains. We proceed to discuss the main results in Section \ref{sec:main}, and their proofs can be found in Section \ref{sec:proof}. We present two algorithms for our proposed variant in Section \ref{sec:algo}, and discuss possible drawbacks of $X^{M_2}$ in the Epilogue in Section \ref{sec:epilogue}. Finally, we conclude our paper with a list of possible future research directions in Section \ref{sec:conclusion}.

\section{Preliminaries}\label{sec:prelim}

\subsection{Two types of simulated annealing: $X^{M_1}$ and its improved variant $X^{M_2}$}\label{subsec:M1M2}

In this section, we introduce the two types of simulated annealing that will be the focus of this paper. Along the way we will fix a few notations. Suppose that we are given the task to minimize a function $U: \mathcal{X} \to \mathbb{R}$ living on a finite state space $\mathcal{X}$. In the setting of simulated annealing, there is an ergodic homogeneous and continuous-time Markov chain with generator $Q$ and stationary distribution $\pi$ that acts as the proposal chain. We assume that $Q$ is reversible with respect to $\pi$, that is, the detailed balance condition is satisfied with $\pi(x) Q(x,y) = \pi(y) Q(y,x)$ for all $x,y \in \mathcal{X}$. Equvialently, $Q$ is a self-adjoint operator in the Hilbert space $\ell^2(\pi)$, endowed with the usual inner product
$$ \langle f,g \rangle_{\pi} := \sum_{x \in \mathcal{X}} f(x) g(x) \pi(x), \quad f,g: \mathcal{X} \rightarrow \mathbb{R}.$$
For any reversible $Q$, it is well-known that the quadratic form of $-Q$ can be written as
\begin{align}\label{eq:quadraticform}
	\langle -Qf,f \rangle_{\pi} = \dfrac{1}{2} \sum_{x,y \in \mathcal{X}} (f(y)-f(x))^2 Q(x,y)\pi(x).
\end{align}
For any reversible ergodic generator $-Q$, we arrange its eigenvalues in non-decreasing order and write $0 = \lambda_1(-Q) < \lambda_2(-Q) \leq \lambda_3(-Q) \ldots \leq \lambda_{|\mathcal{X}|}(-Q)$. It is well-known that $\lambda_2(-Q)$ is the spectral gap of $Q$ and admits a variational formula given by
\begin{align}\label{eq:spectralgap}
\lambda_2(-Q) = \inf_{f \in \ell^2(\pi): \pi(f) = 0} \dfrac{\langle -Qf,f \rangle_{\pi}}{\langle f,f \rangle_{\pi}}.
\end{align} 
Apart from the proposal chain $Q$, another critical component in simulated annealing is a monotonically decreasing function $T(t)$ that we call the temperature or the cooling schedule. We assume that $T(t) > 0$ and decreases to $0$ as $t \to \infty$. We write $\pi_{T(t)}$ to be the Gibbs distribution with probability mass function given by
$$\pi_{T(t)}(x) = \dfrac{e^{-\frac{U(x)}{T(t)}}\pi(x)}{Z_{T(t)}}, \quad x \in \mathcal{X},$$
where $Z_{T(t)} = \sum_{x \in \mathcal{X}} e^{-\frac{U(x)}{T(t)}}\pi(x)$ is the normalization constant. For any $x,y \in \mathbb{R}$, we also denote $x \vee y := \max\{x,y\}, x \wedge y := \min\{x,y\}$ and $x_+ := x \vee 0$. At each time $t$, classical simulated annealing amounts to a Metropolis-Hastings or acceptance-rejection procedure: a move is proposed by the proposal chain with generator $Q$, and is accepted with probability that depends on $T(t)$ and $U$ in a way such that the Markov chain generated by the algorithm at time $t$ is reversible with respect to $\pi_{T(t)}$. Precisely, we have  

\begin{definition}[Classical simulated annealing $X^{M_1}$]\label{def:M1}
Given a target function $U$ on finite state space $\mathcal{X}$, a proposal continuous-time ergodic Markov chain with generator $Q$ and a cooling schedule $T(t)$, the simulated annealing algorithm $X^{M_1} = (X^{M_1}_t)_{t \geq 0}$ is a non-homogeneous Markov chain with generator given by $M_{1,t} = M_{1,t}(Q,U,T(t)) = (M_{1,t}(x,y))_{x,y \in \mathcal{X}}$ for $t \geq 0$, where
$$M_{1,t}(x,y) := \begin{cases} Q(x,y) \min \left\{ 1,e^{\frac{U(x)-U(y)}{T(t)}} \right\} = Q(x,y) e^{-\frac{(U(y)-U(x))_+}{T(t)}}, &\mbox{if } x \neq y; \\
- \sum_{z: z \neq x} M_{1,t}(x,z), & \mbox{if } x = y. \end{cases}$$
We write $P^{M_1} = (P^{M_1}_{s,t})_{0 \leq s \leq t} = (P^{M_1}_{s,t}(x,y))_{x,y \in \mathcal{X},s \leq t}$ to be the transition semigroup of $X^{M_1}$, where $P^{M_1}_{s,t}(x,y)$ is the transition probability of $X^{M_1}$ starting in state $x$ at time $s$ to state $y$ at time $t$.
\end{definition}

Recent advances in simulated annealing includes investigating piecewise deterministic Markov processes and their annealing variants, see for example \cite{Mon16}. Inspired by the recent work by the author \cite{Choi16,CH18}, we would like to introduce a variant of $X^{M_1}$ that we call $X^{M_2}$. It can be constructed by mirroring the transition effect of $X^{M_1}$ to capture the opposite movement. More precisely, we have

\begin{definition}[The improved variant $X^{M_2}$]\label{def:M2}
Given a target function $U$ on finite state space $\mathcal{X}$, a proposal continuous-time ergodic Markov chain with generator $Q$ and a cooling schedule $T(t)$, the improved variant $X^{M_2} = (X^{M_2}_t)_{t \geq 0}$ is a non-homogeneous Markov chain with generator given by $M_{2,t} = M_{2,t}(Q,U,T(t)) = (M_{2,t}(x,y))_{x,y \in \mathcal{X}}$ for $t \geq 0$, where
$$M_{2,t}(x,y) := \begin{cases} Q(x,y) \max \left\{ 1,e^{\frac{U(x)-U(y)}{T(t)}} \right\} = Q(x,y) e^{\frac{(U(x)-U(y))_+}{T(t)}} = e^{\frac{|U(x)-U(y)|}{T(t)}} M_{1,t}(x,y), &\mbox{if } x \neq y; \\
- \sum_{z: z \neq x} M_{2,t}(x,z), & \mbox{if } x = y. \end{cases}$$
We write $P^{M_2} = (P^{M_2}_{s,t})_{0 \leq s \leq t} = (P^{M_2}_{s,t}(x,y))_{x,y \in \mathcal{X},s \leq t}$ to be the transition semigroup of $X^{M_2}$, where $P^{M_2}_{s,t}(x,y)$ is the transition probability of $X^{M_2}$ starting in state $x$ at time $s$ to state $y$ at time $t$.
\end{definition}

\begin{rk}[On Freidlin-Wentzell theory]
	It is well-known that the classical simulated annealing chain $X^{M_1}$ fits perfectly into the framework of Freidlin-Wentzell theory \cite{FW12}, and this perspective has been employed by many researchers to derive fine eigenvalues and convergence estimates of $X^{M_1}$, see for example \cite{CC88, DM94,DelMiclo99,Miclo96}. However, $X^{M_2}$ does not fit into this Freidlin-Wentzell framework since $M_{2,t}$ does not have exponentially vanishing transition rate. 
\end{rk}

Comparing Definition \ref{def:M1} and \ref{def:M2}, we say that $X^{M_2}$ is a improved variant of $X^{M_1}$ in the following sense: starting at a state $x$ the transition rate of $X^{M_2}$ to any other state $y \neq x$ is greater than that of $X^{M_1}$ at any time $t$. Mathematically we see that in defining $M_1$ we take $\min$ while in $M_2$ we consider $\max$ for off-diagonal entries. Intuitively, we can imagine $Q$ as the base transition rate. $X^{M_1}$ and $X^{M_2}$ both modify this base rate $Q$ according to their own (and opposite) rules. If $U(y) \leq U(x)$, $X^{M_1}$  leaves $Q$ unchanged while $X^{M_2}$  increases this rate to $Q(x,y)e^{\frac{U(x)-U(y)}{T(t)}}$. The larger the difference between $U(x)$ and $U(y)$, the greater the ``boost'' on the base transition rate for $X^{M_2}$. On the other hand if $U(y) > U(x)$, $X^{M_1}$ lower this base rate to $Q(x,y)e^{-\frac{(U(y)-U(x))}{T(t)}}$ while $X^{M_2}$ leaves the base rate $Q$ unchanged. We will see that these two key differences allow $X^{M_2}$ to converge under fast cooling schedule and is able to escape local minima effectively.

The main goal of the paper is to derive convergence theorems for the improved variant $X^{M_2}$ under perhaps faster cooling schedule $T(t)$, and to compare the properties and behaviour between $X^{M_1}$ and $X^{M_2}$. To allow for effective comparison between these generators, we recall the notion of Peskun ordering of continuous-time Markov chains. This partial ordering was first introduced by \cite{Pesk73} for discrete time Markov chains on finite state space. It was further generalized by \cite{Tie98} and \cite{LM08} to general state space and to continuous-time Markov chains respectively.

\begin{definition}[Peskun ordering]
	Suppose that there are two continuous-time Markov chains with generators $Q_1$ and $Q_2$ respectively, and both chains share the same stationary distribution $\pi$. $Q_1$ is said to dominate $Q_2$ off-diagonally, written as $Q_1 \succeq Q_2$, if for all $x \neq y \in \mathcal{X}$, we have
	$$Q_1(x,y) \geq Q_2(x,y).$$
\end{definition}

In the following, we collect a few elementary yet important observations and results on the behaviour of generators $M_{1,t}$ and $M_{2,t}$ at a fixed time $t \geq 0$. These results will be repeatedly used to develop our main results in Section \ref{sec:main}.

\begin{lemma}\label{lem:M1M2}
	Given a target function $U$ on finite state space $\mathcal{X}$, a proposal continuous-time ergodic Markov chain with generator $Q$ and a cooling schedule $T(t)$, at a fixed $t \geq 0$ we have
	\begin{enumerate}
		\item(Reversibility) $M_{1,t}$ and $M_{2,t}$ are reversible with respect to the Gibbs distribution $\pi_{T(t)}$. \label{it:1}
		\item(Peskun ordering) $M_{2,t} \succeq M_{1,t}$. \label{it:2}
		\item(Quadratic form) \label{it:3} For any function $f:\ \mathcal{X}\rightarrow \mathbb{R}$ with $f \in \ell^2(\pi_{T(t)})$,
		$$\langle M_{2,t} f,f \rangle_{\pi_{T(t)}} \leq \langle M_{1,t} f,f \rangle_{\pi_{T(t)}},$$
		where
		\begin{align*}
			\langle -M_{2,t} f,f \rangle_{\pi_{T(t)}} &= \dfrac{1}{2 Z_{T(t)}} \sum_{x,y \in \mathcal{X}} (f(y)-f(x))^2 e^{-\frac{\min\{U(x),U(y)\}}{T(t)}}Q(x,y)\pi(x), \\
			\langle -M_{1,t} f,f \rangle_{\pi_{T(t)}} &= \dfrac{1}{2 Z_{T(t)}} \sum_{x,y \in \mathcal{X}} (f(y)-f(x))^2 e^{-\frac{\max\{U(x),U(y)\}}{T(t)}}Q(x,y)\pi(x).
		\end{align*}
		\item(Spectral gap) \label{it:4}
		$$\lambda_2(-M_{2,t}) \geq \lambda_2(-M_{1,t}).$$
	\end{enumerate}
\end{lemma}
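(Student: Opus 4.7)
The plan is to prove the four claims in the order stated, since they build on one another: item (1) yields a reversible (and symmetric) representation of $\pi_{T(t)}(x)M_{i,t}(x,y)$ that feeds directly into the Dirichlet-form formulas of item (3); item (2) is a direct pointwise comparison; item (4) is an immediate consequence of item (3) via the variational formula \eqref{eq:spectralgap}.

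For (1), I would verify detailed balance by direct computation, leaning on the two algebraic identities
\begin{align*}
	U(x) + (U(y)-U(x))_+ &= \max\{U(x),U(y)\},\\
	U(x) - (U(x)-U(y))_+ &= \min\{U(x),U(y)\}.
\end{align*}
Substituting the definitions of $M_{1,t}$ and $M_{2,t}$ into $\pi_{T(t)}(x)M_{i,t}(x,y)$, these identities collapse the exponent to a function of $\max$ or $\min$, which is symmetric in $(x,y)$; detailed balance then reduces to $\pi(x)Q(x,y)=\pi(y)Q(y,x)$, which holds by the reversibility of $Q$. As a by-product, the computation yields the symmetric representations
\begin{align*}
	\pi_{T(t)}(x) M_{1,t}(x,y) &= \frac{e^{-\max\{U(x),U(y)\}/T(t)}}{Z_{T(t)}}\, \pi(x)Q(x,y),\\
	\pi_{T(t)}(x) M_{2,t}(x,y) &= \frac{e^{-\min\{U(x),U(y)\}/T(t)}}{Z_{T(t)}}\, \pi(x)Q(x,y),
\end{align*}
which will be reused in (3). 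Claim (2) is then immediate from $\max\{1,r\}\ge\min\{1,r\}$ applied to $r=e^{(U(x)-U(y))/T(t)}$ for $x\ne y$.

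For (3), I would plug these two symmetric expressions into the general reversible Dirichlet-form identity \eqref{eq:quadraticform} applied to $M_{i,t}$ on $\ell^2(\pi_{T(t)})$, which reproduces the two stated formulas verbatim. The inequality $\langle M_{2,t}f, f\rangle_{\pi_{T(t)}} \le \langle M_{1,t}f, f\rangle_{\pi_{T(t)}}$ is then a termwise comparison: $\min\{U(x),U(y)\}\le \max\{U(x),U(y)\}$ gives $e^{-\min\{U(x),U(y)\}/T(t)}\ge e^{-\max\{U(x),U(y)\}/T(t)}$, while the remaining nonnegative factor $(f(y)-f(x))^2 Q(x,y)\pi(x)$ is common to both sums. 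For (4), I would apply \eqref{eq:spectralgap} separately to $M_{1,t}$ and $M_{2,t}$; since both are reversible with respect to the same $\pi_{T(t)}$ by (1), the admissible set $\{f:\pi_{T(t)}(f)=0\}$ and the denominator $\langle f,f\rangle_{\pi_{T(t)}}$ are identical, so the pointwise Rayleigh-quotient inequality from (3) passes to the infimum and yields $\lambda_2(-M_{2,t})\ge\lambda_2(-M_{1,t})$.

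There is no substantive obstacle; the lemma is essentially a bookkeeping exercise. The one place where care is needed is the sign arithmetic in the two algebraic identities used in (1), so I would record them explicitly upfront before deploying them in both (1) and (3).
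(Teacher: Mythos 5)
Your proposal is correct and follows essentially the same path as the paper: the same two algebraic identities for $\max$ and $\min$ establish detailed balance and the symmetric representations of $\pi_{T(t)}(x)M_{i,t}(x,y)$, which then feed \eqref{eq:quadraticform} to give the two Dirichlet-form formulas, and item (4) follows from \eqref{eq:spectralgap} exactly as you describe. The one place you diverge is the inequality in item (3): the paper obtains $\langle M_{2,t}f,f\rangle_{\pi_{T(t)}} \leq \langle M_{1,t}f,f\rangle_{\pi_{T(t)}}$ by citing the general Peskun-ordering comparison theorem of Leisen and Mira (using items (1) and (2)), whereas you prove it by a direct termwise comparison $e^{-\min\{U(x),U(y)\}/T(t)} \geq e^{-\max\{U(x),U(y)\}/T(t)}$ against the common nonnegative weight $(f(y)-f(x))^2 Q(x,y)\pi(x)$; your route is self-contained and equally valid here, at the cost of not exhibiting the link to the Peskun-ordering machinery that the paper emphasizes.
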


\begin{proof}

We first prove item \eqref{it:1}. For any $x \neq y$, we have
$$
\pi_{T(t)}(x)M_{2,t}(x,y) = \dfrac{e^{-\frac{U(x)}{T(t)}}\pi(x)}{Z_{T(t)}}Q(x,y) e^{\frac{(U(x)-U(y))_+}{T(t)}} = \dfrac{e^{-\frac{U(y)}{T(t)}}\pi(y)}{Z_{T(t)}}Q(y,x) e^{\frac{(U(y)-U(x))_+}{T(t)}} = \pi_{T(t)}(y)M_{2,t}(y,x),
$$
where in the second equality we use the reversibility of $Q$ with respect to $\pi$ and $\min\{U(x),U(y)\} = U(x) - (U(x) - U(y))_+ = U(y) - (U(y)-U(x))_+$. $M_2$ is therefore reversible with respect to $\pi_{T(t)}$. Similarly, the reversibility of $M_{1,t}$ can be deduced via 
$$
\pi_{T(t)}(x)M_{1,t}(x,y) = \dfrac{e^{-\frac{U(x)}{T(t)}}\pi(x)}{Z_{T(t)}}Q(x,y) e^{\frac{-(U(y)-U(x))_+}{T(t)}} = \dfrac{e^{-\frac{U(y)}{T(t)}}\pi(y)}{Z_{T(t)}}Q(y,x) e^{\frac{-(U(x)-U(y))_+}{T(t)}} = \pi_{T(t)}(y)M_{1,t}(y,x),
$$
where we use again the reversibility of $Q$ and $\max\{U(x),U(y)\} = U(x) + (U(y)-U(x))_+ = U(y) + (U(x) - U(y))_+.$

Next, we prove item \eqref{it:2}. For the off-digaonal entries with $x \neq y$,
	$$
M_{2,t}(x,y) = Q(x,y) \max \left\{ 1,e^{\frac{U(x)-U(y)}{T(t)}}\right\} \geq   Q(x,y) \min \left\{ 1,e^{\frac{U(x)-U(y)}{T(t)}}\right\} = M_{1,t}(x,y).
$$
Using both item \eqref{it:1} and item \eqref{it:2}, item \eqref{it:3} follows readily from \cite[Theorem $5$]{LM08} since $M_{2,t} \succeq M_{1,t}$ and they are both reversible with respect to $\pi_{T(t)}$. In addition, using \eqref{eq:quadraticform} and 
\begin{align*}
\pi_{T(t)}(x)M_{2,t}(x,y) &= \dfrac{1}{ Z_{T(t)}} e^{-\frac{\min\{U(x),U(y)\}}{T(t)}}Q(x,y)\pi(x), \\
\pi_{T(t)}(x)M_{1,t}(x,y) &= \dfrac{1}{ Z_{T(t)}} e^{-\frac{\max\{U(x),U(y)\}}{T(t)}}Q(x,y)\pi(x),
\end{align*}
we have
\begin{align*}
\langle -M_{2,t}f,f \rangle_{\pi_{T(t)}} &= \dfrac{1}{2} \sum_{x,y \in \mathcal{X}} (f(y)-f(x))^2 M_{2,t}(x,y)\pi_{T(t)}(x) \\
&= \dfrac{1}{2 Z_{T(t)}} \sum_{x,y \in \mathcal{X}} (f(y)-f(x))^2 e^{-\frac{\min\{U(x),U(y)\}}{T(t)}}Q(x,y)\pi(x).
\end{align*}
Similar expression can be obtained for $M_{1,t}$. Finally, we prove item \eqref{it:4}. It follows easily from item \eqref{it:3} and the variational principle for spectral gap \eqref{eq:spectralgap}.
\end{proof}

\subsection{Review on ergodicity of non-homogeneous Markov chains}

In this section, we will give a short detour on various notions of ergodicity of non-homogeneous Markov chains. As the classical simulated annealing $X^{M_1}$ and its improved variant $X^{M_2}$ are non-homogeneous Markov chains, these notions of ergodicity are particularly important in order to properly understand our main results in Section \ref{sec:main}. In particular, we will apply Lemma \ref{lem:stronge} and Lemma \ref{lem:relativee} below to derive our main results. To keep our notations consistent with the previous section, we assume a non-homogeneous Markov chain with generator $Q_t$ that depends on time $t$ and Markov semigroup $(P_{s,t})_{0 \leq s \leq t}$ on state space $\mathcal{X}$. For any two probability measures $\mu$ and $\nu$ on $\mathcal{X}$, we write 
$$||\mu - \nu||_{TV} := \dfrac{1}{2} \sum_{x \in \mathcal{X}} |\mu(x) - \nu(x)|$$
to be the total variation distance between $\mu$ and $\nu$. We now recall the notions of strong ergodicity and weak ergodicity of non-homogeneous Markov chains.

\begin{definition}[Strong ergodicity and weak ergodicity]\label{def:weakestronge}
	Let $X = (X_t)_{t \geq 0}$ be a non-homogeneous continuous-time Markov chain with generator $Q_t$ and Markov semigroup $(P_{s,t})_{0 \leq s \leq t}$ on a finite state space $\mathcal{X}$. $X$ is said to be strongly ergodic if there exists a probability measure $\mu$ on $\mathcal{X}$ such that for all $s \geq 0$, we have
	$$\lim_{t \to \infty} \sup_{x \in \mathcal{X}} ||P_{s,t}(x,\cdot) - \mu||_{TV} = 0.$$
	$X$ is said to be weakly ergodic if for all $s \geq 0$, we have
	$$\lim_{t \to \infty} \sup_{x,y \in \mathcal{X}} ||P_{s,t}(x,\cdot) - P_{s,t}(y,\cdot)||_{TV} = 0.$$
\end{definition}

It is easy to see that strong ergodicity implies weak ergodicity using the triangle inequality of total variation distance. In the setting of classical simulated annealing $X^{M_1}$, necessary and sufficient conditions for weak ergodicity are known, see e.g. \cite{Miclo95}. 

%
%

Sufficient condition for strong ergodicity in terms of the transition rates $Q_t$ can be found in \cite{JI88}. We now give a sufficient condition for strong ergodicity in terms of the spectral gap of $Q_t$:

\begin{lemma}[Sufficient condition for strong ergodicity \cite{Gidas85}]\label{lem:stronge}
	Let $X = (X_t)_{t \geq 0}$ be a non-homogeneous continuous-time Markov chain with generator $Q_t$ and Markov semigroup $(P_{s,t})_{0 \leq s \leq t}$ on a finite state space $\mathcal{X}$. Assume that $Q_t$ is reversible with respect to a probability measure $\mu_t$ and its spectral gap is $\lambda_2(-Q_t)$. If there exists a function $\gamma(t)$ and a probability measure $\mu$ such that
	\begin{align*}
	\left| \dfrac{d \mu_t(x)}{dt} \right| &\leq \gamma(t)\mu_t(x), \quad x \in \mathcal{X},\\
	\int_0^{\infty} \lambda_2(-Q_t)\,dt &= \infty, \\
	\lim_{t \to \infty} \dfrac{\gamma(t)}{\lambda_2(-Q_t)} &= 0, \\
	\lim_{t \to \infty} ||\mu_t - \mu||_{TV} &= 0,
	\end{align*}
	then $X$ is strongly ergodic and converges to $\mu$ in total variation distance.
\end{lemma}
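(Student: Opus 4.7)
The plan is to prove strong ergodicity by decomposing via triangle inequality
$$\|P_{s,t}(x,\cdot) - \mu\|_{TV} \leq \|P_{s,t}(x,\cdot) - \mu_t\|_{TV} + \|\mu_t - \mu\|_{TV},$$
where the second term already vanishes by assumption, so the heart of the matter is showing that $P_{s,t}(x,\cdot)$ tracks the instantaneous reversible measure $\mu_t$ uniformly in $x$. To quantify this tracking, I would work with the $\chi^2$-type functional $\Phi(t) := \|\phi_t\|_{\ell^2(\mu_t)}^2$ where $\phi_t(y) := P_{s,t}(x,y)/\mu_t(y) - 1$. Since $\mu_t(\phi_t)=0$ and $\|P_{s,t}(x,\cdot)-\mu_t\|_{TV} \leq \tfrac{1}{2}\sqrt{\Phi(t)}$ by Cauchy--Schwarz, it suffices to show $\Phi(t) \to 0$.

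The key computation is to differentiate $\Phi(t)$ along the flow. Using the Kolmogorov forward equation $\frac{d}{dt}P_{s,t} = P_{s,t}Q_t$ together with reversibility of $Q_t$ w.r.t.\ $\mu_t$ (which yields $\sum_z f_t(z)\mu_t(z)Q_t(z,y)/\mu_t(y) = (Q_t f_t)(y)$), one finds
$$\frac{df_t}{dt}(y) = (Q_t \phi_t)(y) - f_t(y)\,\frac{d\log\mu_t(y)}{dt}.$$
Substituting into $\frac{d\Phi}{dt} = 2\sum_y \phi_t \frac{d\phi_t}{dt}\mu_t + \sum_y \phi_t^2 \frac{d\mu_t}{dt}$ and invoking the spectral gap via $\langle -Q_t \phi_t,\phi_t\rangle_{\mu_t} \geq \lambda_2(-Q_t)\Phi(t)$, the first assumption $|\frac{d\mu_t}{dt}|\leq \gamma(t)\mu_t$ lets me bound the residual terms by a multiple of $\gamma(t)(\Phi(t) + \sqrt{\Phi(t)})$, yielding a differential inequality of the form
$$\frac{d\Phi}{dt} \leq -2\lambda_2(-Q_t)\Phi(t) + C\gamma(t)\bigl(\Phi(t) + \sqrt{\Phi(t)}\bigr).$$

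Under the hypothesis $\gamma(t)/\lambda_2(-Q_t) \to 0$, the linear drift term is eventually dominated by the contraction, so a Gronwall-type argument together with $\int_0^\infty \lambda_2(-Q_t)\,dt = \infty$ shows that $\Phi(t) \to 0$ uniformly in the starting state $x$; splitting the integral into $[s,T]$ and $[T,t]$ for large $T$ handles the initial ``error budget'' from the forcing. Converting back through Cauchy--Schwarz then gives $\sup_x \|P_{s,t}(x,\cdot)-\mu_t\|_{TV} \to 0$, and combined with $\|\mu_t-\mu\|_{TV}\to 0$ yields strong ergodicity.

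The main obstacle I expect is the careful bookkeeping of the forcing term arising from $\frac{d\mu_t}{dt}\neq 0$: unlike the homogeneous case, $\mu_t$ itself is moving, so the $\chi^2$ functional is not monotone and one must verify that the Gronwall argument closes in the square-root regime (the $\sqrt{\Phi(t)}$ term), extracting uniform-in-$x$ decay from a nonlinear ODI. This is precisely where the ratio assumption $\gamma(t)/\lambda_2(-Q_t) \to 0$ does its work, ensuring that the cooling is slow enough relative to the mixing that $\Phi$ is driven to zero rather than merely to a small positive multiple of $\gamma(t)/\lambda_2(-Q_t)$.
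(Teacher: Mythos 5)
Your proposal is correct and follows essentially the same route as the paper: the paper does not reprove this lemma (it is imported from \cite{Gidas85}) but its remark after Lemma \ref{lem:relativee} identifies the proof skeleton as exactly the differential inequality $f'(t)\leq -g(t)f(t)+h(t)$ with $\int_0^\infty g\,dt=\infty$ and $h/g\to 0$ forcing $f\to 0$, which is precisely what your $\chi^2$-functional computation produces (after the substitution $\psi=\sqrt{\Phi}$, which turns your nonlinear ODI into that linear form with $g=\lambda_2(-Q_t)$ and $h\asymp\gamma(t)$). The remaining details you flag (uniformity in $x$, handling the $\sqrt{\Phi}$ forcing) are routine on a finite state space, so the argument closes.
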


Another notion of ergodicity that we will study is convergence in relative entropy. We write $\mathrm{Ent}_{\nu}(\mu)
$ to denote the relative entropy of $\mu$ with respect to $\nu$, that is,
$$\mathrm{Ent}_{\nu}(\mu) := \sum_{x \in \mathcal{X}} \mu(x) \log \left( \dfrac{\mu(x)}{\nu(x)}\right).$$

\begin{definition}[Convergence in relative entropy]\label{def:relativee}
	Let $X = (X_t)_{t \geq 0}$ be a non-homogeneous continuous-time Markov chain with generator $Q_t$ and Markov semigroup $(P_{s,t})_{0 \leq s \leq t}$ on a finite state space $\mathcal{X}$. Suppose further that the stationary measure of $Q_t$ is $\mu_t$ at a fixed $t \geq 0$. We say that $X$ converges in relative entropy if for every $x \in \mathcal{X}$,
	$$\lim_{t \to \infty} \mathrm{Ent}_{\mu_t}(P_{0,t}(x,\cdot)) = 0.$$
\end{definition}

Note that convergence in relative entropy of the classical simulated annealing $X^{M_1}$ are discussed in \cite{Miclo92,DelMiclo99}. We now state a sufficient condition for convergence in relative entropy in the setting of $X^{M_1}$ and $X^{M_2}$:

\begin{lemma}[Sufficient condition for convergence in relative entropy \cite{Miclo92,DelMiclo99}]\label{lem:relativee}
	For $i = 1,2$, let $X^{M_i} = (X^{M_i}_t)_{t \geq 0}$ be the non-homogeneous continuous-time Markov chain introduced in Definition \ref{def:M1} and \ref{def:M2} with generator $M_{i,t}$ and Markov semigroup $(P_{s,t}^{M_i})_{0 \leq s \leq t}$ on a finite state space $\mathcal{X}$, where $M_{i,t}$ are both reversible with respect to $\pi_{T(t)}$. If there exists a constant $R \geq 0$, a function $a_{T(t)}$ and the cooling schedule is selected such that for $x \in \mathcal{X}$,
	\begin{align*}
		\dfrac{d}{dt} \mathrm{Ent}_{\pi_{T(t)}} (P_{0,t}^{M_i}(x,\cdot)) &\leq -a_{T(t)}^{-1}\mathrm{Ent}_{\pi_{T(t)}} (P_{0,t}^{M_i}(x,\cdot)) + R \left|\dfrac{d}{dt}T(t)\right| \dfrac{1}{T(t)^2}, \\
		\int_0^{\infty} a_{T(t)}^{-1} \,dt &= \infty, \\
		\lim_{t \to \infty} \left|\dfrac{d}{dt}T(t)\right| \dfrac{a_{T(t)}}{T(t)^2} &= 0,
	\end{align*}
	then $X^{M_i}$ converges in relative entropy, i.e. for $x \in \mathcal{X}$,
	$$\lim_{t \to \infty} \mathrm{Ent}_{\pi_{T(t)}} (P_{0,t}^{M_i}(x,\cdot)) = 0.$$
\end{lemma}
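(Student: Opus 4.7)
The plan is to write $h(t) := \mathrm{Ent}_{\pi_{T(t)}}(P_{0,t}^{M_i}(x,\cdot))$ and use the given differential inequality
\[
h'(t) \leq -a_{T(t)}^{-1} h(t) + R \left|\tfrac{d}{dt}T(t)\right| \tfrac{1}{T(t)^2}
\]
as a scalar ODE inequality for the non-negative function $h$. The classical proof in \cite{Miclo92, DelMiclo99} proceeds by variation of parameters (i.e., Gr\"onwall's inequality), and I would follow exactly that route. Setting $A(t) := \int_0^t a_{T(s)}^{-1}\,ds$, multiplying through by the integrating factor $e^{A(t)}$ and integrating from $0$ to $t$ gives the a priori bound
\[
h(t) \leq h(0)\, e^{-A(t)} + R \int_0^t \tfrac{|T'(s)|}{T(s)^2}\, e^{-(A(t)-A(s))}\,ds.
\]
Note that $h(0)$ is finite because $P_{0,0}^{M_i}(x,\cdot) = \delta_x$ and $\pi_{T(0)}$ charges every point of the finite state space $\mathcal{X}$.

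Next I would dispatch the two terms using the two remaining hypotheses. The first term vanishes as $t\to\infty$ directly from $\int_0^\infty a_{T(s)}^{-1}\,ds = \infty$. The second term is the main technical point and I would handle it by splitting the integral at a time $s_0$ chosen so that $|T'(s)|\, a_{T(s)} / T(s)^2 \leq \epsilon$ for all $s \geq s_0$, which is possible by the third hypothesis. On $[s_0,t]$, replacing $|T'(s)|/T(s)^2$ by $\epsilon\, a_{T(s)}^{-1}$ and recognizing the resulting integrand as $-\frac{d}{ds} e^{-(A(t)-A(s))}$ yields a bound of $R\epsilon$ after telescoping. On $[0,s_0]$, I would use that $T$ is monotonically decreasing, so
\[
\int_0^{s_0} \tfrac{|T'(s)|}{T(s)^2}\,ds \;=\; \tfrac{1}{T(s_0)} - \tfrac{1}{T(0)}
\]
is finite, and then pull out the overall factor $e^{-(A(t)-A(s_0))}$, which vanishes as $t\to\infty$ by the divergence of $A$. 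Letting $\epsilon \downarrow 0$ concludes that $h(t)\to 0$.

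The only genuine obstacle I anticipate is the middle step: verifying that splitting the convolution-type integral really collapses to $R\epsilon$ plus a vanishing remainder, and in particular being careful that the change of variables $-\frac{d}{ds}e^{-(A(t)-A(s))} = a_{T(s)}^{-1} e^{-(A(t)-A(s))}$ is applied correctly and that the boundary term at $s=t$ equals $1$ rather than something involving $A(t)$. Everything else — existence/finiteness of $h(0)$, validity of Gr\"onwall's inequality for an absolutely continuous $h$, and monotonicity of $T$ — is routine. The proof does not require any structural property of $M_{i,t}$ beyond what is used to derive the hypothesized differential inequality itself, which is why the same statement applies uniformly to both $i=1$ and $i=2$.
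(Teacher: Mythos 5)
Your proof is correct and is precisely the argument the paper alludes to: the paper does not prove this lemma in detail but only cites \cite{Miclo92,DelMiclo99} and remarks afterwards that the hypotheses reduce to the general fact that $f' \leq -gf + h$, $\int_0^\infty g = \infty$, $h/g \to 0$ imply $f \to 0$, which is exactly what your Gr\"onwall-plus-splitting argument establishes. The details you supply (finiteness of $h(0)$ via $\mathrm{Ent}_{\pi_{T(0)}}(\delta_x) = \log(1/\pi_{T(0)}(x))$, the telescoping bound $\epsilon(1 - e^{-(A(t)-A(s_0))}) \leq \epsilon$ on $[s_0,t]$, and the explicit evaluation $\int_0^{s_0}|T'|/T^2 = 1/T(s_0) - 1/T(0)$ using monotonicity of $T$) are all sound, so this is a faithful completion of the paper's sketched approach rather than a different route.
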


We remark that the proof of Lemma \ref{lem:stronge} and Lemma \ref{lem:relativee} are very similar: the conditions in these lemmas guarantee that, for some functions $g,h$ and differentiable function $f$ that satisfy
\begin{align*}
	\dfrac{d}{dt}f(t) &\leq - g(t) f(t) + h(t), \quad
	\int_0^{\infty} g(t) \,dt = \infty,	\quad
	\lim_{t \to \infty} \dfrac{h(t)}{g(t)} = 0,
\end{align*}
then $\lim_{t \to \infty} f(t) = 0$. Another related remark is that log-Sobolev inequalities are used in \cite{Miclo92} to establish the result in Lemma \ref{lem:relativee}.

\section{Main results}\label{sec:main}

This section contains the main results of this paper, which can be classified into the following three categories:
\begin{itemize}
	
	\item In Theorem \ref{thm:stronge}, we discuss strong ergodicity and convergence in relative entropy of $X^{M_2}$ under logarithmic cooling. In some cases depending on $U$ and $Q$, we provide convergence guarantee under faster than logarithmic cooling in Theorem \ref{thm:stronge2} and Theorem \ref{thm:relativee2}.
	
	\item In additional to the asymptotic convergence results for $X^{M_2}$, we provide finite-time convergence estimates for $X^{M_2}$ in Corollary \ref{thm:finite}.
	
	\item In Theorem \ref{thm:escape}, we show that $X^{M_2}$ escapes from local minimum under general cooling.
\end{itemize} 

The proof of these results will be presented in Section \ref{sec:proof}.

Before we proceed to discuss strong ergodicity of $X^{M_2}$, we introduce a few parameters that will play a fundamental role in the cooling schedule. Similar to \cite{HS88,Lowe96,I94}, we say that a path from $x$ to $y$ is any sequence of points starting from $x_0 = x, x_1, x_2,\ldots, x_n =y$ such that $Q(x_{i-1},x_i) > 0$ for $i = 1,2,\ldots,n$. Irreducibility of $Q$ guarantees such path exists for any $x \neq y$. Let $\Gamma^{x,y}$ denote the set of paths from $x$ to $y$, and elements of $\Gamma^{x,y}$ are denoted by $\gamma = (\gamma_i)_{i=0}^n$. If the value of $U(x)$ is considered as the elevation at $x$, then the highest elevation along a path $\gamma \in \Gamma^{x,y}$ is $\mathrm{Elev}_1(\gamma) = \max\{U(\gamma_i);~\gamma_i \in \gamma\}$, and the lowest possible highest elevation along any path from $x$ to $y$ is 
$$H_1(x,y) := \min\{\mathrm{Elev}_1(\gamma);~\gamma \in \Gamma^{x,y}\}.$$
As we shall see later in the main results, it turns out that the natural counterpart of $\mathrm{Elev}_1(\gamma)$ and $H_1(x,y)$ are $\mathrm{Elev}_2(\gamma)$ and $H_2(x,y)$ respectively: 
\begin{align*}
	\mathrm{Elev}_2(\gamma) &:= \max\{U(\gamma_{i-1}) \wedge U(\gamma_{i});~\gamma_i \in \gamma, i = 1,2,\ldots,n\}, \\
	H_2(x,y) &:= \min\{\mathrm{Elev}_2(\gamma);~\gamma \in \Gamma^{x,y}\}.
\end{align*}
We also write
\begin{align}
	U_{min} &:= \{x \in \mathcal{X};~ U(x) = \min_y U(y) \}, \nonumber \\
	\pi_{min}(x) &:= \begin{cases}
	\dfrac{\pi(x)}{\pi(U_{min})}, \quad &\mathrm{if}~ x \in U_{min}, \\
	0, \quad &\mathrm{if}~ x \notin U_{min}.
	\end{cases} \nonumber \\
	R &:= \max_x U(x) - \min_x U(x), \label{eq:R}\\
	K &:= \pi(U_{min})^{-1} - 1, \quad B := \min_{x \in U_{min}^c} U(x) - \min_x U(x). \label{eq:AB}
\end{align}
Two states $x$ and $y$ are said to be equivalent if there exists a path $\gamma^{x,y} = (\gamma_i^{x,y})_{i=0}^n$ of constant energy from $x$ to $y$, that is, $U(\gamma_i^{x,y}) = U(\gamma_{i+1}^{x,y})$ for $i = 0,\ldots,n-1$. A point $x$ is said to be a local minimum either if the set $\{y \in \mathcal{X};~U(y) < U(x)\}$ is empty (in such a case $x$ is a point of global minimum) or for each $y$ with $U(y) < U(x)$ and for each path $\gamma^{x,y} = \Gamma^{x,y}$, there exists $z \in \gamma^{x,y}$ such that $U(z) > U(x)$.

With these notations in mind, the two hill-climbing parameters that we are interested in are
\begin{align}
	c_{M_1} = c_{M_1}(Q,U) &:= \max_{x,y \in \mathcal{X}}\{H_1(x,y) - U(x) - U(y)\}, \label{eq:cm1} \\
	c_{M_2} = c_{M_2}(Q,U) &:= \max_{x,y \in \mathcal{X}}\{H_2(x,y) - U(x) - U(y)\}. \label{eq:cm2}
\end{align}

In our subsequent analysis, we shall assume without loss of generality that 
$$\min_{x \in \mathcal{X}} U(x) = 0.$$
Note that the same assumption is also imposed in the simulated annealing literature such as \cite{HS88}.

We now state a few basic properties regarding these two constants:
\begin{proposition}[Comparison and characterizations of $c_{M_1}$ and $c_{M_2}$]\label{prop:comparecm1cm2}
	Suppose that $c_{M_i}$ are defined as in \eqref{eq:cm1} and \eqref{eq:cm2} for $i = 1,2$. We have
	\begin{enumerate}
		\item\label{it:cm1cm2} $c_{M_1} \geq c_{M_2}$. In particular, when $U$ does not have repeated values, $c_{M_1} > c_{M_2}$.
		\item\label{it:cm1nonnegative} $c_{M_1} \geq 0$. 
		\item\label{it:cm2negative} ($c_{M_2}$ can be negative) For $x \neq y \in \mathcal{X}$, we write $s^{x,y}$ to be the second largest element along the path $\gamma^{x,y}$ that achieves the lowest elevation $H_1(x,y)$, that is,
		\begin{align*}
			U^{x,y}_{max} &:= \{w \in \gamma^{x,y};~ U(w) = \mathrm{Elev}_1(\gamma^{x,y}) = H_1(x,y)\} \\
			s^{x,y} &:= \begin{cases}
			\max\{U(w);~ w \in \gamma^{x,y} \backslash U^{x,y}_{max}\}, \quad &\mathrm{if}~|U^{x,y}_{max}| = 1, \\
			\mathrm{Elev}_1(\gamma^{x,y}) = H_1(x,y), \quad &\mathrm{if}~|U^{x,y}_{max}| > 1.
			\end{cases}
		\end{align*}
		If $s^{x,y} \leq U(x) + U(y)$, then $c_{M_2} \leq 0$.
		\item\label{it:whencm2negative} (Characterization of negative $c_{M_2}$) Assume that $U$ does not have repeated values, then $c_{M_2} < 0$ if and only if $c_{M_1} = 0$ if and only if $U$ has $1$ local minimum (up to equivalence).
	\end{enumerate}
\end{proposition}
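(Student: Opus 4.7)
The plan is to derive all four items from two elementary observations. For any path $\gamma = (\gamma_0, \ldots, \gamma_n)$ the edge-wise minimum is dominated by the vertex-wise maximum, so $\mathrm{Elev}_2(\gamma) \le \mathrm{Elev}_1(\gamma)$, which gives $H_2(x,y) \le H_1(x,y)$ and hence the weak inequality in item \eqref{it:cm1cm2}; under the standing convention $\min_x U(x) = 0$, every path from a global minimum $x^*$ to any $y \neq x^*$ contains $y$, so $H_1(x^*, y) \ge U(y) = U(x^*) + U(y)$, which gives item \eqref{it:cm1nonnegative}. For the strict inequality in \eqref{it:cm1cm2} when $U$ is injective, I would take $(x^*, y^*)$ achieving the max in $c_{M_2}$ together with an $H_1$-optimal path $\gamma^* \in \Gamma^{x^*,y^*}$: injectivity forces the top-level vertex $v^*$ on $\gamma^*$ to be unique, so every edge of $\gamma^*$ has at least one endpoint with $U$-value strictly below $U(v^*)$, yielding $\mathrm{Elev}_2(\gamma^*) < \mathrm{Elev}_1(\gamma^*) = H_1(x^*,y^*)$, and the chain $H_2(x^*,y^*) \le \mathrm{Elev}_2(\gamma^*) < H_1(x^*,y^*)$ gives $c_{M_2} < c_{M_1}$.

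For item \eqref{it:cm2negative} I would work with an $H_1$-optimal path $\gamma^{x,y}$ and prove $\mathrm{Elev}_2(\gamma^{x,y}) \le s^{x,y}$ for every $x \neq y$, which together with the hypothesis $s^{x,y} \le U(x) + U(y)$ gives $H_2(x,y) \le U(x) + U(y)$ and hence $c_{M_2} \le 0$. The proof splits on $|U^{x,y}_{max}|$: if the top level is attained at a single vertex $v^*$, then every edge of $\gamma^{x,y}$ either contains $v^*$, in which case its minimum is the other endpoint and is at most $s^{x,y}$ by the very definition of $s^{x,y}$, or avoids $v^*$, in which case both endpoints lie in $\gamma^{x,y}\setminus U^{x,y}_{max}$ and are therefore $\le s^{x,y}$; if the top level is attained at least twice, then $s^{x,y} = H_1(x,y)$ and the bound $\mathrm{Elev}_2 \le \mathrm{Elev}_1$ from \eqref{it:cm1cm2} already suffices.

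Item \eqref{it:whencm2negative} combines the previous pieces with a path-concatenation argument. Injectivity of $U$ turns every equivalence class into a singleton, so ``$1$ local minimum up to equivalence'' simply means a unique local minimum, necessarily equal to the global minimum $x^*$. The first equivalence $c_{M_1} = 0 \Leftrightarrow$ unique local minimum is essentially classical: uniqueness supplies, for each $z \neq x^*$, a strictly descending path to $x^*$, and concatenating a descent from $x$ with an ascent to $y$ through $x^*$ produces a path with vertex-maximum $\max(U(x), U(y)) \le U(x) + U(y)$, which combined with \eqref{it:cm1nonnegative} forces $c_{M_1} = 0$; conversely, if a second local minimum $y^*$ exists, every path leaving $y^*$ must first step to a neighbour with $U$-value strictly above $U(y^*)$, forcing $H_1(y^*, x^*) > U(y^*) = U(y^*) + U(x^*)$ and $c_{M_1} > 0$. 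For the second equivalence, the direction $c_{M_1} = 0 \Rightarrow c_{M_2} < 0$ uses the same descend-then-ascend path: by no-repeats every edge-minimum along it is strictly smaller than $\max(U(x), U(y)) \le U(x) + U(y)$, with strictness supplied by the ascending leg alone when one endpoint equals $x^*$; the converse follows by contrapositive, since $c_{M_1} > 0$ yields via the first equivalence a non-global local minimum $y^*$, and any path-edge of $\Gamma^{y^*,x^*}$ incident to $y^*$ has minimum exactly $U(y^*)$, giving $H_2(y^*, x^*) \ge U(y^*) + U(x^*)$ and thus $c_{M_2} \ge 0$.

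The main obstacle I foresee is the strict-inequality bookkeeping inside item \eqref{it:whencm2negative}, specifically the direction $c_{M_1} = 0 \Rightarrow c_{M_2} < 0$ when one of $x$ or $y$ equals the global minimum $x^*$: the descending leg of the concatenated path then collapses to a single vertex, and strictness of $H_2(x,y) < U(x) + U(y)$ must be extracted from the ascending leg alone, which is precisely where the no-repeats hypothesis becomes essential.
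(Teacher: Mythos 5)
Your proposal is correct. For items \eqref{it:cm1cm2}--\eqref{it:cm2negative} it follows essentially the same route as the paper: the driving inequalities are $\mathrm{Elev}_2(\gamma)\le \mathrm{Elev}_1(\gamma)$, hence $H_2(x,y)\le H_1(x,y)$, for item \eqref{it:cm1cm2}, and $H_2(x,y)\le \mathrm{Elev}_2(\gamma^{x,y})\le s^{x,y}$ for item \eqref{it:cm2negative}; you in fact add value by proving the bound $\mathrm{Elev}_2(\gamma^{x,y})\le s^{x,y}$ with an edge-by-edge case split that the paper merely asserts, and your item \eqref{it:cm1nonnegative} (every path from a global minimizer $x^*$ to $y$ contains $y$, so $H_1(x^*,y)\ge U(y)=U(x^*)+U(y)$) is self-contained where the paper instead invokes, via a citation, the fact that the maximum defining $c_{M_1}$ is attained with one endpoint in $U_{min}$. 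The genuine divergence is in item \eqref{it:whencm2negative}: for $c_{M_2}<0\Rightarrow c_{M_1}=0$ the paper argues by contradiction, truncating an optimal path at its second vertex so as to violate the maximality defining $c_{M_1}$, whereas you argue by contrapositive --- $c_{M_1}>0$ produces a non-global local minimum $y^*$, every path out of $y^*$ begins with an edge whose minimum equals $U(y^*)$, so $H_2(y^*,x^*)\ge U(y^*)=U(y^*)+U(x^*)$ and $c_{M_2}\ge 0$. Your version is cleaner and avoids the delicate index bookkeeping of the paper's truncation step. Two small remarks. First, your separate descend-then-ascend derivation of $c_{M_1}=0\Rightarrow c_{M_2}<0$, and the ``main obstacle'' about strictness that you flag at the end, are unnecessary: the strict inequality of item \eqref{it:cm1cm2} under injectivity already gives $c_{M_2}<c_{M_1}=0$, which is exactly how the paper handles this direction. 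Second, both you and the paper ultimately rest the equivalence between $c_{M_1}=0$ and uniqueness of the local minimum on the classical fact that a unique local minimum furnishes, from every state, a path to the global minimum that never rises above its starting level; the paper cites Proposition 3.1 of Ingrassia (1994) for this, and your sketch leaves that existence statement at the level of assertion, so you should either cite it likewise or supply the finite-state induction.
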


Intuitively speaking, $c_{M_1}$ is the minimal elevation to climb from a local minimum to a global minimum, while $c_{M_2}$ is bounded above by the second largest hill to climb along any path from $x$ to $y$. To gain a better understanding of these two parameters, we plot four graphs in Figure \ref{fig:cM1cM2} as a simple illustration. In Figure \ref{fig:cM1cM2}, $U$ is a one-dimensional function consisting only of crosses on the graph, and we take the proposal chain $Q$ to be a birth-death process that explores neighbouring points on the left or on the right.

\begin{figure}[h] 
	\begin{subfigure}[b]{0.5\linewidth}
		\centering
		\includegraphics[width=1.1\linewidth]{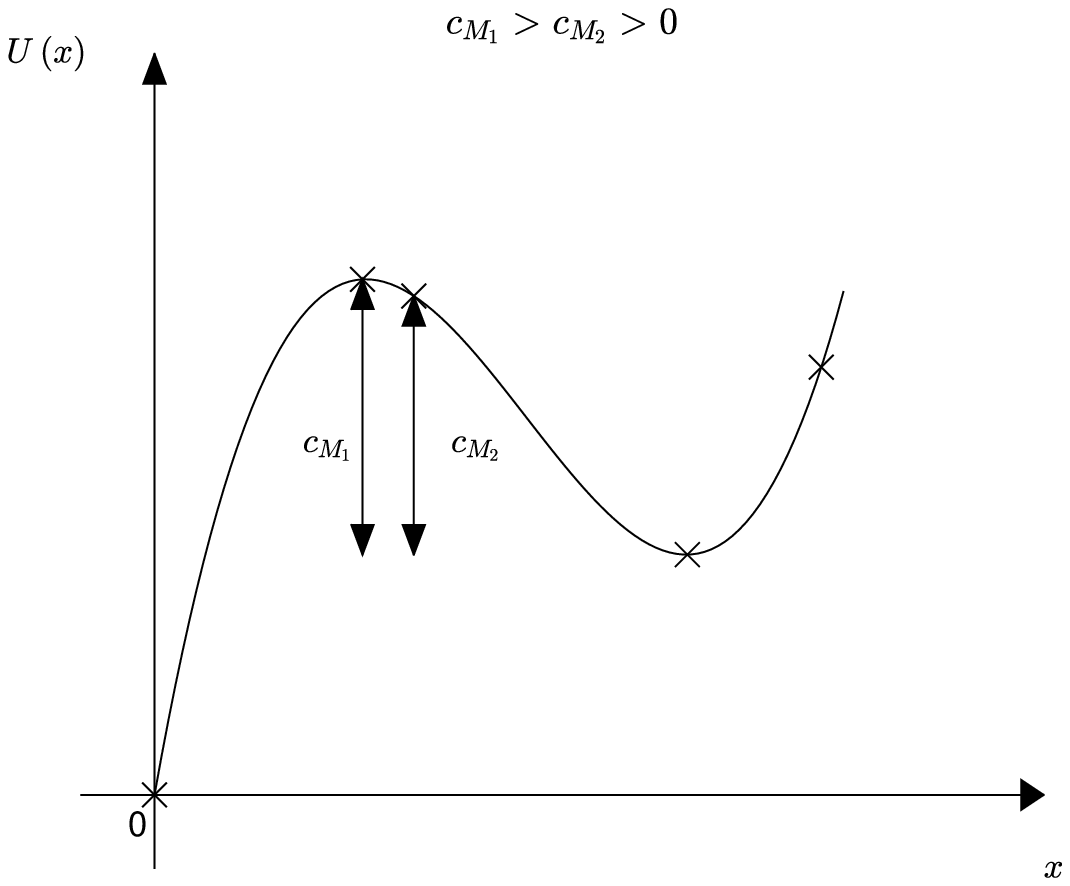} 
		\caption{$c_{M_1} > c_{M_2} > 0$} 
		\label{fig:a} 
		\vspace{4ex}
	\end{subfigure}
	\begin{subfigure}[b]{0.5\linewidth}
		\centering
		\includegraphics[width=1.1\linewidth]{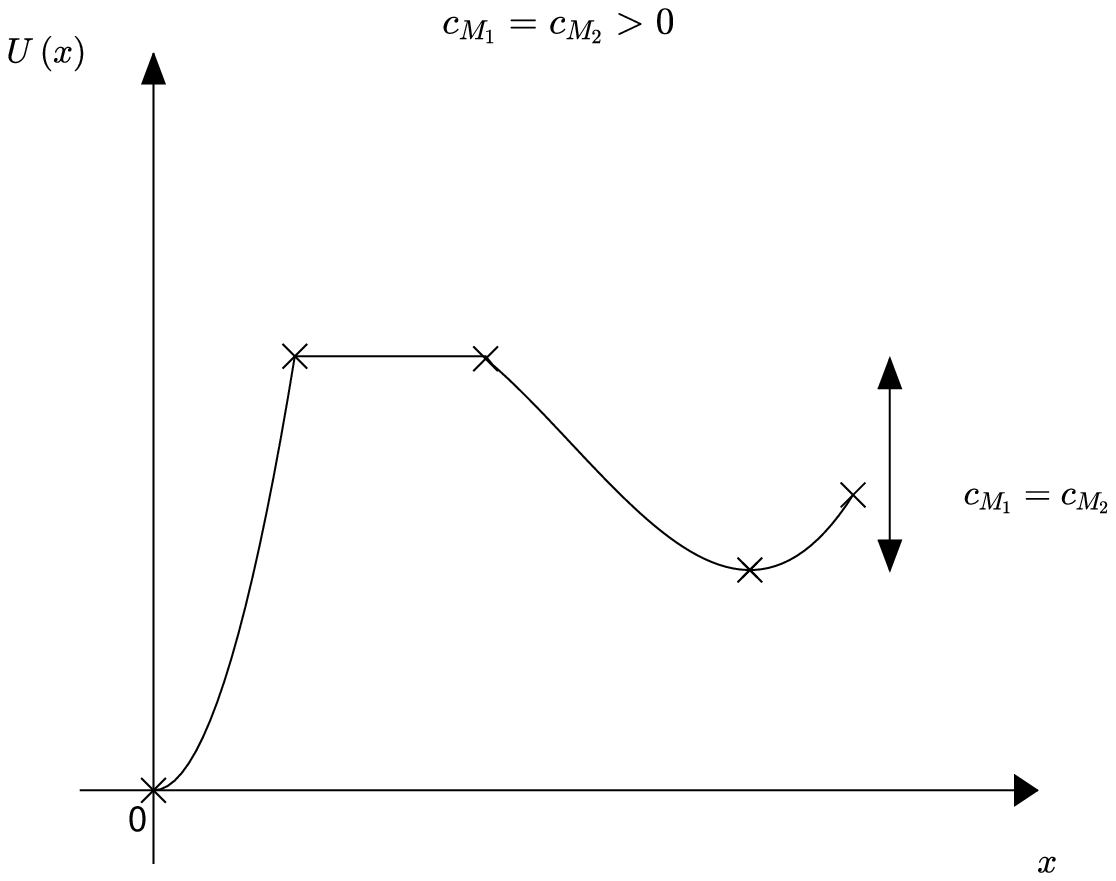} 
		\caption{$c_{M_1} = c_{M_2} > 0$} 
		\label{fig:b} 
		\vspace{4ex}
	\end{subfigure} 
	\begin{subfigure}[b]{0.5\linewidth}
		\centering
		\includegraphics[width=1.1\linewidth]{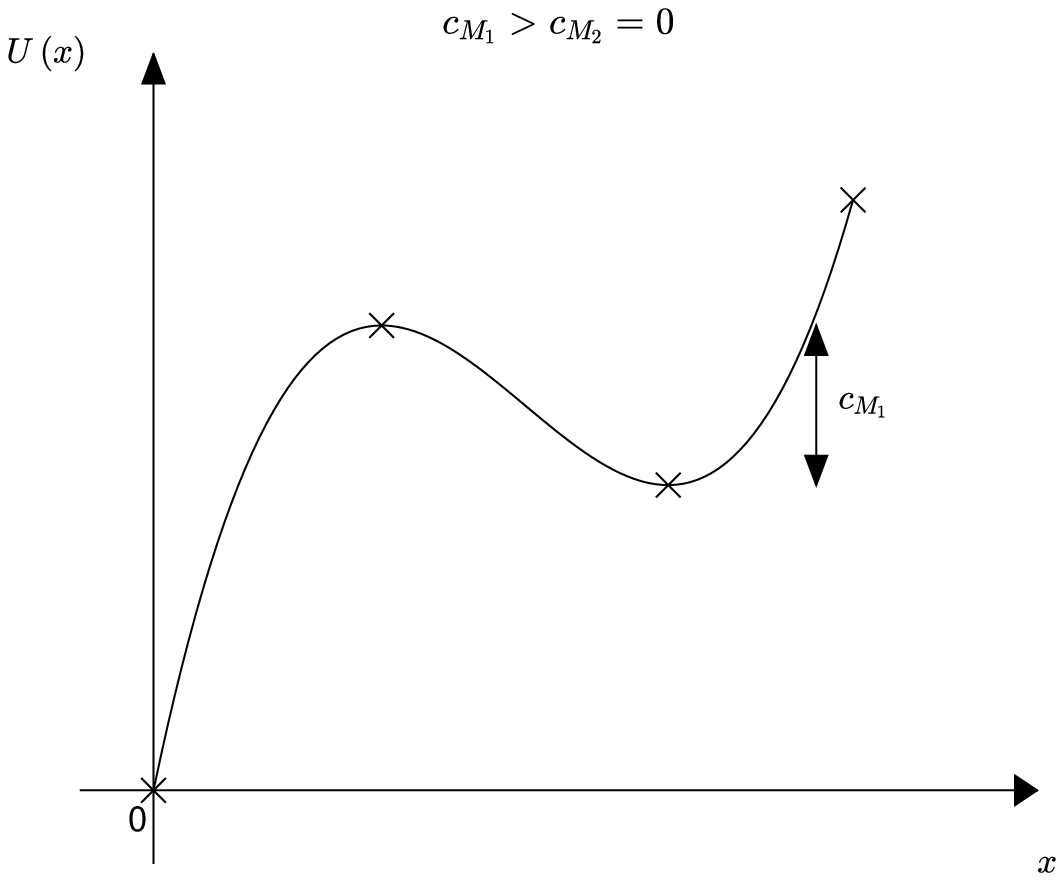} 
		\caption{$c_{M_1} > c_{M_2} = 0$} 
		\label{fig:c} 
	\end{subfigure}
	\begin{subfigure}[b]{0.5\linewidth}
		\centering
		\includegraphics[width=1.1\linewidth]{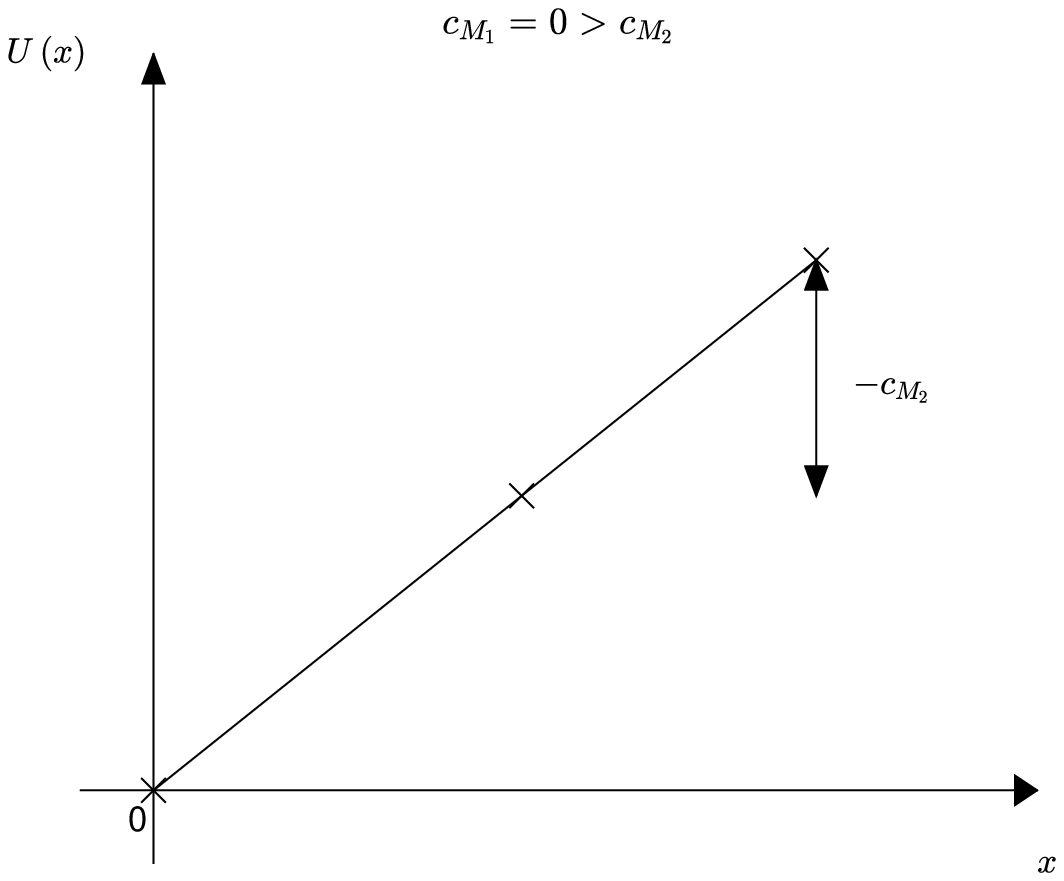} 
		\caption{$c_{M_1} = 0 > c_{M_2}$} 
		\label{fig:d} 
	\end{subfigure} 
	\caption{Illustration of $c_{M_1}$ and $c_{M_2}$ in different situations. We assume that $U$ is one-dimensional consisting of crosses on the graph, and $Q$ is a birth-death process that only explore neighbouring points on the left or on the right.}
	\label{fig:cM1cM2} 
\end{figure}

We recall that for cooling schedule of the form $T(t) = \dfrac{c_{M_1} + \epsilon}{\log (t+1)}$, $\epsilon > 0$, \cite{Gidas85, HS88} show the strong ergodicity of $X^{M_1}$. In the following main result, $c_{M_2}$ plays a similar role as that of $c_{M_1}$ in the cooling schedule. We split our ergodicity results into two cases according to whether $c_{M_2} > 0$ or $c_{M_2} \leq 0$. We first look at the case $c_{M_2} > 0$:

\begin{theorem}[Strong ergodicity of $X^{M_2}$ and convergence in relative entropy under logarithmic cooling and $c_{M_2} > 0$]\label{thm:stronge}
	Let $X^{M_2}$ be the improved variant introduced in Definition \ref{def:M2}, and suppose that $c_{M_2} > 0$. Under the logarithmic cooling schedule of the form 
	$$T(t) = \dfrac{c_{M_2} + \epsilon}{\log(t+1)},$$ 
	where $\epsilon > 0$, then $X^{M_2}$ is 
	\begin{enumerate}
		\item\label{it:stronge} strongly ergodic  and converges to $\pi_{min}$ in total variation distance in the sense of Definition \ref{def:weakestronge};
		\item\label{it:relativee} converges in relative entropy in the sense of Definition \ref{def:relativee}. Note that this implies item \eqref{it:stronge} by the Pinsker's inequality.
	\end{enumerate}
	Thus, the improved variant $X^{M_2}$ converges at a faster logarithmic cooling compared to $X^{M_1}$, and the speed-up depends on the difference $c_{M_1} - c_{M_2}$.
\end{theorem}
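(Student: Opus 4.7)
The plan is to apply Lemma \ref{lem:stronge} for part \eqref{it:stronge} and Lemma \ref{lem:relativee} for part \eqref{it:relativee}, taking $Q_t = M_{2,t}$ and $\mu_t = \pi_{T(t)}$. The reversibility requirement is already supplied by Lemma \ref{lem:M1M2}\eqref{it:1}, so the task reduces to verifying the three quantitative hypotheses in each of those lemmas under the specific cooling schedule $T(t) = (c_{M_2}+\epsilon)/\log(t+1)$.

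First I would establish a lower bound on $\lambda_2(-M_{2,t})$ of Arrhenius type,
\[
\lambda_2(-M_{2,t}) \;\geq\; C\, e^{-c_{M_2}/T(t)},
\]
for a constant $C>0$ independent of $t$ and for all $t$ large enough. The crucial point is that, unlike the classical case, the quadratic form expression in Lemma \ref{lem:M1M2}\eqref{it:3} features $\min\{U(x),U(y)\}$ rather than $\max\{U(x),U(y)\}$. Plugging this into the variational characterization \eqref{eq:spectralgap} and running the path argument of Holley--Stroock / Ingrassia along a geodesic realizing $H_2(x,y)$ produces the constant $c_{M_2}$ from \eqref{eq:cm2} in the exponent instead of $c_{M_1}$. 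This is essentially the content of Lemma \ref{lem:spectralgaplower}, which I shall invoke. Under our cooling schedule this yields
\[
\lambda_2(-M_{2,t}) \;\geq\; C\,(t+1)^{-c_{M_2}/(c_{M_2}+\epsilon)},
\]
and since the exponent is strictly less than $1$, the integral $\int_0^\infty \lambda_2(-M_{2,t})\,dt$ diverges.

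Second, I would bound the time derivative of the Gibbs distribution. A direct differentiation gives
\[
\frac{1}{\pi_{T(t)}(x)}\left|\frac{d}{dt}\pi_{T(t)}(x)\right| \;\leq\; \gamma(t) \;:=\; 2R\,\frac{|T'(t)|}{T(t)^2},
\]
with $R$ as in \eqref{eq:R}, so $|d\pi_{T(t)}(x)/dt| \leq \gamma(t)\pi_{T(t)}(x)$. With $T(t) = (c_{M_2}+\epsilon)/\log(t+1)$ one has $\gamma(t) = O(1/((t+1)\log(t+1)))$, whereas the spectral gap decays only polynomially, so $\gamma(t)/\lambda_2(-M_{2,t}) \to 0$. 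The convergence $\pi_{T(t)}\to \pi_{min}$ in total variation is a standard Laplace-type computation: as $T(t)\downarrow 0$, mass concentrates on $U_{min}$ in proportion to $\pi$. Combining these ingredients verifies all hypotheses of Lemma \ref{lem:stronge} and yields \eqref{it:stronge}.

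For part \eqref{it:relativee} I would apply Lemma \ref{lem:relativee} with $a_{T(t)}^{-1}$ equal to (a constant multiple of) the logarithmic Sobolev constant of $M_{2,t}$. Using a Holley--Stroock comparison together with the quadratic form expression of Lemma \ref{lem:M1M2}\eqref{it:3}, one obtains an analogous lower bound $a_{T(t)}^{-1} \geq C e^{-c_{M_2}/T(t)}$, so $\int_0^\infty a_{T(t)}^{-1}\,dt = \infty$ under our schedule and the entropy dissipation term dominates the driving term $|T'(t)|/T(t)^2$. Pinsker then upgrades this to strong ergodicity, giving an alternative route to \eqref{it:stronge}. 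The main obstacle throughout is the spectral-gap / log-Sobolev estimate: one must show that the new quadratic form weighted by $\exp(-\min\{U(x),U(y)\}/T(t))$ really does replace $c_{M_1}$ by $c_{M_2}$, which is precisely where the improved variant $X^{M_2}$ extracts its gain over $X^{M_1}$.
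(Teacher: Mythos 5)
Your proposal follows essentially the same route as the paper: both parts are reduced to Lemma \ref{lem:stronge} and Lemma \ref{lem:relativee} with $Q_t = M_{2,t}$, $\mu_t = \pi_{T(t)}$, the decisive ingredient being the Arrhenius-type spectral gap bound of Lemma \ref{lem:spectralgaplower} (proved exactly as you describe, by running the Holley--Stroock path argument on the quadratic form weighted by $e^{-\min\{U(x),U(y)\}/T(t)}$, which is where $c_{M_1}$ gets replaced by $c_{M_2}$), and the entropy part likewise rests on a log-Sobolev constant with the same exponent $c_{M_2}$ obtained via \cite[Theorem $3.21$]{HS88}. Two harmless imprecisions: a direct computation gives $\gamma(t)=O(1/(t+1))$ rather than your $O(1/((t+1)\log(t+1)))$, and the paper's log-Sobolev bound carries an extra factor $(1+1/T(t))^{-1}$, but neither affects the verification that $\int_0^\infty \lambda_2(-M_{2,t})\,dt=\infty$, $\gamma(t)/\lambda_2(-M_{2,t})\to 0$, and the corresponding conditions of Lemma \ref{lem:relativee}.
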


At first glance, it is perhaps hard to see the reason why $c_{M_2}$ appears in the cooling schedule for $X^{M_2}$ while $c_{M_1}$ is the corresponding constant for $X^{M_1}$. The reason becomes clear when one looks at the proof of Theorem \ref{thm:stronge}: it relies crucially on a spectral gap lower bound of $M_{2,t}$, where the difference between $c_{M_1}$ and $c_{M_2}$ follows from the difference between the quadratic form of $M_{1,t}$ and $M_{2,t}$ in Lemma \ref{lem:M1M2}. This observation also leads us to the second case when $c_{M_2} \leq 0$. In such case, the spectral gap of $M_{2,t}$ is uniformly bounded away from $0$ for all $t > 0$, and so one expect that faster than logarithmic cooling should work for $X^{M_2}$ in this regime. Our next two results tell us that it is indeed the case:

\begin{theorem}[Strong ergodicity of $X^{M_2}$ under faster than logarithmic cooling and $c_{M_2} \leq 0$]\label{thm:stronge2}
	Let $X^{M_2}$ be the improved variant introduced in Definition \ref{def:M2}, and suppose that $c_{M_2} \leq 0$. If the cooling schedule satisfies
	\begin{align}\label{eq:fastcool}
	\lim_{t \to \infty} \left(\dfrac{d}{dt} T(t)\right) \dfrac{e^{\frac{c_{M_2}}{T(t)}}}{T(t)^2} = 0 ,
	\end{align}
	then $X^{M_2}$ is strongly ergodic  and converges to $\pi_{min}$ in total variation distance in the sense of Definition \ref{def:weakestronge}. If $c_{M_2} = 0$, examples of fast cooling schedule that satisfy \eqref{eq:fastcool} are
	\begin{enumerate}
		\item(power law cooling) $T(t) = (t+1)^{-\alpha}$, where $\alpha \in (0,1)$.
		\item(powers of logarithmic cooling) $T(t) = \left(\log(t+1)\right)^{-k}$, where $k > 1$.
		\item $T(t) = (t+1)^{-\alpha} \left(\log(t+1)\right)^{-1}$, where $\alpha \in (0,1)$.
	\end{enumerate}
	If $c_{M_2} < 0$, examples of fast cooling schedule that satisfy \eqref{eq:fastcool}, in addition to those listed above, are
	\begin{enumerate}[resume]
		\item(exponential cooling) $T(t) = e^{-t}$.
		\item(power law cooling) $T(t) = (t+1)^{-\alpha}$, where $\alpha > 0$.
	\end{enumerate}
	Note that all these examples are faster than logarithmic cooling in the sense that
	$\lim_{t \to \infty} T(t) \log(t+1) = 0$.
\end{theorem}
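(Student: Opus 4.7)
The plan is to invoke Lemma \ref{lem:stronge} with $Q_t = M_{2,t}$ and $\mu_t = \pi_{T(t)}$, and to feed it the spectral gap lower bound $\lambda_2(-M_{2,t}) \geq \kappa\, e^{-c_{M_2}/T(t)}$ that is the content of Lemma \ref{lem:spectralgaplower}. Because $c_{M_2} \leq 0$, the factor $e^{-c_{M_2}/T(t)} \geq 1$, so $\lambda_2(-M_{2,t})$ is bounded below by the positive constant $\kappa$ (and in fact blows up when $c_{M_2}<0$). This is the structural reason why logarithmic cooling is no longer needed in this regime.

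First I would verify the derivative bound on $\pi_{T(t)}$. A direct computation from $\pi_{T(t)}(x) = e^{-U(x)/T(t)}\pi(x)/Z_{T(t)}$ and the fact that $U$ is bounded yields
\begin{align*}
\left|\frac{d}{dt}\pi_{T(t)}(x)\right| \leq \frac{R\, |T'(t)|}{T(t)^2}\,\pi_{T(t)}(x),
\end{align*}
where $R$ is as in \eqref{eq:R}. This gives the candidate $\gamma(t) := R|T'(t)|/T(t)^2$. Second, $\int_0^\infty \lambda_2(-M_{2,t})\,dt \geq \int_0^\infty \kappa\,dt = \infty$ is automatic. Third, the ratio $\gamma(t)/\lambda_2(-M_{2,t})$ is controlled by $(R/\kappa)\,|T'(t)|\,e^{c_{M_2}/T(t)}/T(t)^2$, which vanishes by the assumed condition \eqref{eq:fastcool}. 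Fourth, $\pi_{T(t)} \to \pi_{\min}$ in total variation as $T(t) \to 0$ is the standard Laplace/Gibbs limit, so all four hypotheses of Lemma \ref{lem:stronge} are met and strong ergodicity with limit $\pi_{\min}$ follows.

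Finally I would verify each listed example against \eqref{eq:fastcool}. When $c_{M_2}=0$ the condition reduces to $|T'(t)|/T(t)^2 \to 0$: for $T(t)=(t+1)^{-\alpha}$ this equals $\alpha(t+1)^{\alpha-1}$, which vanishes exactly when $\alpha\in(0,1)$; for $T(t)=(\log(t+1))^{-k}$ it equals $k(\log(t+1))^{k-1}/(t+1)$, which vanishes for any $k>0$, and the restriction $k>1$ is then imposed only so that the schedule is strictly faster than logarithmic in the sense $T(t)\log(t+1)\to 0$; the mixed case is handled the same way. When $c_{M_2}<0$ the extra factor $e^{c_{M_2}/T(t)}$ decays super-exponentially as $T(t)\to 0$ and crushes any polynomial growth of $|T'(t)|/T(t)^2$, so $T(t)=e^{-t}$ and $T(t)=(t+1)^{-\alpha}$ for all $\alpha>0$ satisfy \eqref{eq:fastcool} with room to spare.

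The only genuinely delicate step is the spectral gap bound $\lambda_2(-M_{2,t}) \geq \kappa\, e^{-c_{M_2}/T(t)}$, but this is exactly Lemma \ref{lem:spectralgaplower} and may be invoked as a black box; everything else is routine differentiation and the elementary asymptotic comparisons above. The main conceptual point, which I would make explicit in the write-up, is that the quadratic form identity of Lemma \ref{lem:M1M2}\eqref{it:3} is what replaces $\max\{U(x),U(y)\}$ by $\min\{U(x),U(y)\}$ inside the exponential, thereby substituting $c_{M_2}$ for $c_{M_1}$ in the exponent of the spectral gap bound and opening the door to this fast-cooling regime when $c_{M_2} \leq 0$.
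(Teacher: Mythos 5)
Your proposal is correct and follows essentially the same route as the paper: it invokes Lemma \ref{lem:stronge} with $Q_t = M_{2,t}$, $\mu_t = \pi_{T(t)}$ and $\gamma(t) = R\,|T'(t)|/T(t)^2$, uses the bound of Lemma \ref{lem:spectralgaplower} together with $c_{M_2}\leq 0$ to get $\int_0^\infty \lambda_2(-M_{2,t})\,dt = \infty$ and to reduce $\gamma(t)/\lambda_2(-M_{2,t}) \to 0$ to condition \eqref{eq:fastcool}, and then checks the listed schedules by the same elementary limits. Your remark that $k>1$ in the powers-of-logarithm example is needed only for the ``faster than logarithmic'' claim, not for \eqref{eq:fastcool} itself, is a small clarification beyond what the paper states, but the argument is otherwise the same.
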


\begin{theorem}[Convergence in relative entropy of $X^{M_2}$ under faster than logarithmic cooling and $c_{M_2} \leq 0$]\label{thm:relativee2}
	Let $X^{M_2}$ be the improved variant introduced in Definition \ref{def:M2}, and suppose that $c_{M_2} \leq 0$. If the cooling schedule satisfies
	\begin{align}\label{eq:fastcoolrelativee}
	\int_0^{\infty} (1+1/T(t))^{-1} \exp\{-c_{M_2}/T(t)\} \,dt &= \infty, \\
	\lim_{t \to \infty} \left|\dfrac{d}{dt}T(t)\right| \dfrac{\exp\{c_{M_2}/T(t)\}}{T(t)^2 + T(t)^3} &= 0, \label{eq:fastcoolrelativee2}
	\end{align}
	then $X^{M_2}$ converges in relative entropy in the sense of Definition \ref{def:relativee}. If $c_{M_2} = 0$, an example of fast cooling schedule that satisfies \eqref{eq:fastcoolrelativee} and \eqref{eq:fastcoolrelativee2} is
	\begin{enumerate}
		\item(power law cooling) $T(t) = (t+1)^{-\alpha}$, where $\alpha \in (0,1)$.
	\end{enumerate}
	If $c_{M_2} < 0$, an example of fast cooling schedule that satisfies \eqref{eq:fastcoolrelativee} and \eqref{eq:fastcoolrelativee2}, in addition to those listed above, is
	\begin{enumerate}[resume]
		\item(exponential cooling) $T(t) = e^{-t}$.
	\end{enumerate}
	Note that all these examples are faster than logarithmic cooling in the sense that
	$\lim_{t \to \infty} T(t) \log(t+1) = 0$.
\end{theorem}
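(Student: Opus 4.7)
The plan is to apply Lemma \ref{lem:relativee} to the semigroup $P^{M_2}_{0,t}$ with a suitable choice of $a_{T(t)}$ that reflects the modified log-Sobolev constant of the pair $(M_{2,t},\pi_{T(t)})$. With $a_{T(t)}$ chosen so that $a_{T(t)}^{-1}$ is a constant multiple of $(1+1/T(t))^{-1}\exp\{-c_{M_2}/T(t)\}$, the divergence condition $\int_0^\infty a_{T(t)}^{-1}\,dt=\infty$ of Lemma \ref{lem:relativee} becomes exactly \eqref{eq:fastcoolrelativee}, and the vanishing condition $\lim_{t\to\infty}|T'(t)|a_{T(t)}/T(t)^2=0$ becomes (up to polynomial corrections in $T(t)$) exactly \eqref{eq:fastcoolrelativee2}. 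Hence the whole burden of the proof is to verify the differential inequality of Lemma \ref{lem:relativee}, i.e.\ to establish a (modified) log-Sobolev inequality for $M_{2,t}$ with the correct $T(t)$- and $c_{M_2}$-dependence and to absorb the time-dependence of $\pi_{T(t)}$ into the residual term $R|T'(t)|/T(t)^2$ using \eqref{eq:R}.

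Following Miclo \cite{Miclo92} and Del Moral--Miclo \cite{DelMiclo99}, I would differentiate $\mathrm{Ent}_{\pi_{T(t)}}(P_{0,t}^{M_2}(x,\cdot))$ along the non-homogeneous flow: this produces a dissipation term of Dirichlet-form type involving $-M_{2,t}$, plus a drift term from $\partial_t\pi_{T(t)}$ which is bounded directly by $R|T'(t)|/T(t)^2$ since $|\partial_t\log\pi_{T(t)}(x)|\leq R|T'(t)|/T(t)^2$. The dissipation term is then controlled by a log-Sobolev inequality for $(M_{2,t},\pi_{T(t)})$, which is the crux of the argument. Starting from the Dirichlet form identity
$$\langle -M_{2,t}f,f\rangle_{\pi_{T(t)}}=\frac{1}{2Z_{T(t)}}\sum_{x,y\in\mathcal{X}}(f(y)-f(x))^2 e^{-\min\{U(x),U(y)\}/T(t)}Q(x,y)\pi(x)$$
of Lemma \ref{lem:M1M2}(\ref{it:3}), one runs the canonical-path / Rothaus--Miclo argument: for each pair $(x,y)$ select a path $\gamma^{x,y}\in\Gamma^{x,y}$ realising $H_2(x,y)$, so that the bottleneck factor on edge $(\gamma_{i-1},\gamma_i)$ is $e^{-\min\{U(\gamma_{i-1}),U(\gamma_i)\}/T(t)}$ rather than $e^{-\max\{U(\gamma_{i-1}),U(\gamma_i)\}/T(t)}$ as for $M_{1,t}$. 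This is precisely what substitutes the hill-climbing constant $c_{M_2}$ for $c_{M_1}$ in the resulting estimate; a Holley--Stroock-type bounded-perturbation step absorbs $\pi_{T(t)}(U_{min})^{-1}$ into a polynomial-in-$T(t)$ prefactor, yielding a log-Sobolev constant of order $(1+1/T(t))^{-1}\exp\{-c_{M_2}/T(t)\}$, as required.

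Given the log-Sobolev inequality, verification of the examples is routine and mirrors the computations in Theorem \ref{thm:stronge2}: for $T(t)=(t+1)^{-\alpha}$ with $\alpha\in(0,1)$ and $c_{M_2}=0$, $(1+1/T(t))^{-1}\sim T(t)=(t+1)^{-\alpha}$ whose integral diverges, and $|T'(t)|/(T(t)^2+T(t)^3)\sim(t+1)^{\alpha-1}\to 0$; for $T(t)=e^{-t}$ with $c_{M_2}<0$, the factor $\exp\{-c_{M_2}/T(t)\}=\exp\{|c_{M_2}|e^t\}$ forces the integral to diverge super-exponentially while $|T'(t)|e^{c_{M_2}/T(t)}/T(t)^2\sim e^t\exp\{-|c_{M_2}|e^t\}\to 0$. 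The main obstacle is the log-Sobolev estimate with the sharp $c_{M_2}$-rate; once the improved Dirichlet form of Lemma \ref{lem:M1M2}(\ref{it:3}) is plugged into the Miclo-style path machinery with $H_2$ in place of $H_1$, the rest of the analysis parallels the $c_{M_2}>0$ proof of Theorem \ref{thm:stronge}.
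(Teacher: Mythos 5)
Your proposal is correct and follows essentially the same route as the paper: the paper likewise reduces the theorem to Lemma \ref{lem:relativee} with $a_{T(t)}^{-1}$ proportional to $(1+1/T(t))^{-1}\exp\{-c_{M_2}/T(t)\}$, obtains the differential inequality by differentiating the entropy along the flow (drift term bounded by $R|T'(t)|/T(t)^2$), and gets the modified log-Sobolev constant by feeding the $H_2$-based canonical-path spectral gap bound of Lemma \ref{lem:spectralgaplower} into the Holley--Stroock/Miclo machinery of \cite{HS88}, exactly as you describe. Your verification of the two example schedules also matches the paper's computations.
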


Note that the speed-up of the proposed variant is most prominent in the case when $c_{M_1} > 0$ while $c_{M_2} \leq 0$, see for instance the cases in Figure \ref{fig:c} and Figure \ref{fig:d}. When $c_{M_1} > 0$, the convergence guarantee of $X^{M_1}$ is given by the logarithmic cooling
$$\dfrac{c_{M_1}}{\log(t+1)},$$
while for $X^{M_2}$ one can apply a fast and non-logarithmic cooling schedule as mentioned in Theorem \ref{thm:stronge2} and Theorem \ref{thm:relativee2}. On the other hand, the convergence guarantee of $X^{M_2}$ reduces to that of $X^{M_1}$ in the case $c_{M_1} = c_{M_2} > 0$ (see for example Figure \ref{fig:b}). 

While the previous two main results are asymptotic in nature and concerns about long-time convergence in total variation and in relative entropy, in our next result we analyze finite-time behaviour of $X^{M_2}$. This is in fact a simple corollary from the general result in \cite{DM94} for non-homogeneous Markov chains:
\begin{corollary}[Finite-time convergence estimate of $X^{M_2}$]\label{thm:finite}
	Let $X^{M_2}$ be the improved variant introduced in Definition \ref{def:M2}. If $c_{M_2} > 0$, then under the logarithmic cooling schedule of the form 
	$$T(t) = \dfrac{c_{M_2}}{\log(\rho t+1)},$$
	where $$\rho := \dfrac{2 c_{M_2} A}{3R}$$
	and we recall $R, K, B$ are defined in \eqref{eq:R} and \eqref{eq:AB} and $A$ is defined in Lemma \ref{lem:spectralgaplower}. The finite-time estimate is then
	\begin{align}\label{eq:finiteM2}
		\mathbb{P}_x(X^{M_2}_t \notin U_{min}) &\leq 5K (\rho t+1)^{-\frac{B}{c_{M_2}}} + \left(\pi(x)^{-1} -1\right)^{1/2} K^{1/2} (\rho t+1)^{-\frac{R + B/2}{c_{M_2}}}.
	\end{align}
\end{corollary}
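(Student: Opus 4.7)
The plan is to apply the general finite-time bound for non-homogeneous Markov chains of \cite{DM94} directly to $X^{M_2}$. That bound has the schematic form
\begin{align*}
\mathbb{P}_x(X^{M_2}_t \notin U_{min}) \le \|\pi_{T(t)} - \pi_{min}\|_{TV} + \text{(mixing remainder)},
\end{align*}
where the mixing remainder decays like $\exp\{-\tfrac{1}{2}\int_0^t \lambda_2(-M_{2,s})\,ds\}$ up to an $L^2 \to L^1$ conversion factor and a factor controlled by the oscillations of $\log \pi_{T(\cdot)}$ (which for a Gibbs family are bounded in terms of $R/T$). The reversibility of $M_{2,t}$ with respect to $\pi_{T(t)}$ at every instant, required for DM94 to apply, is supplied by Lemma \ref{lem:M1M2}\eqref{it:1}.

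Two inputs then feed the bound. First, from the explicit Gibbs form of $\pi_{T(t)}$ and the definitions of $K$ and $B$ in \eqref{eq:AB}, an elementary computation gives $\|\pi_{T(t)} - \pi_{min}\|_{TV} \le K e^{-B/T(t)}$. Second, I would invoke the spectral gap lower bound of Lemma \ref{lem:spectralgaplower}, which provides a bound of the form $\lambda_2(-M_{2,t}) \ge A e^{-c_{M_2}/T(t)}$. Under the prescribed cooling $T(t) = c_{M_2}/\log(\rho t+1)$ all three relevant exponentials collapse to pure powers of $\rho t+1$:
\begin{align*}
e^{-B/T(t)} = (\rho t+1)^{-B/c_{M_2}}, \qquad e^{R/T(t)} = (\rho t+1)^{R/c_{M_2}}, \qquad e^{-c_{M_2}/T(t)} = (\rho t+1)^{-1},
\end{align*}
and hence $\int_0^t \lambda_2(-M_{2,s})\,ds \ge (A/\rho)\log(\rho t+1)$.

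Substituting these quantities into DM94 produces two polynomially decaying terms in $\rho t+1$. The stationary-distribution contribution together with a triangle inequality collapses to $5K(\rho t+1)^{-B/c_{M_2}}$, with the factor $5$ being a universal constant from \cite{DM94}. The mixing contribution takes the shape $(\pi(x)^{-1}-1)^{1/2}K^{1/2} e^{R/T(t)} \exp\{-\tfrac{1}{2}\int_0^t \lambda_2(-M_{2,s})\,ds\}$, which evaluates to $(\pi(x)^{-1}-1)^{1/2}K^{1/2}(\rho t+1)^{R/c_{M_2}-A/(2\rho)}$ (up to how DM94 distributes the range factor between the two terms). The calibration $\rho = 2c_{M_2}A/(3R)$ is chosen precisely so that the resulting exponent becomes $-(R+B/2)/c_{M_2}$, matching \eqref{eq:finiteM2}.

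The main obstacle is not conceptual but bookkeeping: correctly matching the universal constants of \cite{DM94}, tracking how $R$ enters through the oscillation of $\log \pi_{T(\cdot)}$ as the cooling advances, and executing the Cauchy--Schwarz step that converts an $L^2(\pi_{T(t)})$ contraction bound into a total-variation estimate (which is what generates the $(\pi(x)^{-1}-1)^{1/2}K^{1/2}$ prefactor). All of these are standard once the spectral gap input is in place. Indeed, the genuinely new ingredient is Lemma \ref{lem:spectralgaplower}; once that estimate is available, the DM94 machinery mechanically converts it into \eqref{eq:finiteM2}, which is why the result can be stated as a direct corollary.
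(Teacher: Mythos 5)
Your proposal follows exactly the paper's route: the paper's entire proof consists of citing \cite[Corollary 2.2.8]{DM94} and checking that its three hypotheses are met --- the oscillation condition (2.2.2) via $R$, the distance-to-$\pi_{min}$ condition (2.2.6) via $K$ and $B$, and the spectral gap condition (2.2.1) via Lemma \ref{lem:spectralgaplower} --- which is precisely the decomposition you identify, with the spectral gap lower bound as the only new input. Your additional reconstruction of the internal mechanics of the DM94 bound is heuristic (and hedged as such), but the paper does not reproduce that computation either, so this matches the paper's proof in substance.
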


\begin{rk}
	Recall that in \cite{DM94}, under the cooling schedule 
	$$T_1(t) = \dfrac{c_{M_1}}{\log(\rho_1 t+1)}, \quad \rho_1 := \dfrac{2 c_{M_1} A}{3R},$$
	we have
	\begin{align}\label{eq:finiteM1}
	\mathbb{P}_x(X^{M_1}_t \notin U_{min}) &\leq 5K (\rho_1 t+1)^{-\frac{B}{c_{M_2}}} + \left(\pi(x)^{-1} -1\right)^{1/2} K^{1/2} (\rho_1 t+1)^{-\frac{R + B/2}{c_{M_1}}}.
	\end{align}
	Since $T_1(t)$ is non-decreasing in $c_{M_1}$ and $c_{M_1} \geq c_{M_2}$ by Proposition \ref{prop:comparecm1cm2}, the upper bound in \eqref{eq:finiteM2} is less than or equal to that of \eqref{eq:finiteM1}. Thus we have a tighter estimate on the finite-time performance of $X^{M_2}$.
\end{rk}
There are simple examples in the literature which demonstrate $X^{M_1}$ can get stuck at a local minimum and does not converge to $U_{min}$ under fast cooling, see e.g. \cite[Example $8.10$]{Bremaud99} and \cite[Example $13.4$]{H02}. As a result, one possible concern for $X^{M_2}$ under fast cooling is that it may exhibit similar behaviour as $X^{M_1}$ and get trapped at a local minimum as the temperature cools down quickly. Our next result shows that such concern is not valid. Let us now introduce a few notations:
\begin{align}
	N(x) &:= \{y \in \mathcal{X};~Q(x,y) > 0\}, \label{eq:neighbour}\\
	U_{min}^{loc} &:= \{x \in \mathcal{X};~ U(y) \geq U(x), ~ y \in N(x)\}, \\
	\delta &:= \min \{U(y) - U(x);~ x \in U^{loc}_{min}, y \in N(x)\}.
\end{align}
Here $N(x)$ is the neighbourhood of $x$ induced by the proposal generator $Q$ and $U_{min}^{loc}$ is the set of local minimum of $U$. If all local minima are strict local minima, then $\delta > 0$.

\begin{theorem}[$X^{M_2}$ effectively escapes local minimum while $X^{M_1}$ may get trapped under fast cooling]\label{thm:escape}
	For $i = 1,2$, let $X^{M_i} = (X^{M_i}_t)_{t \geq 0}$ be the non-homogeneous continuous-time Markov chain introduced in Definition \ref{def:M1} and \ref{def:M2} with generator $M_{i,t}$, proposal generator $Q$ and target function $U$. Suppose that $x \in U^{loc}_{min}$ and under any cooling schedule,
		\begin{align}\label{eq:escape1}
		\mathbb{P}_x(X_t^{M_2} = x ~\forall t \geq 0) = \lim_{t \to \infty} e^{-\left(\sum_{y \neq x} Q(x,y)\right)t} = 0.
		\end{align}
	If $\delta > 0$ (e.g. all local minima are strict) and under cooling schedule of the form
	$$T(t) = \dfrac{\delta-\epsilon}{\log(t+1)},$$
	where $\epsilon > 0$ such that $\delta > \epsilon$, then
	\begin{align}\label{eq:escape2}
		\mathbb{P}_x(X_t^{M_1} = x ~\forall t \geq 0) > 0.
	\end{align}
\end{theorem}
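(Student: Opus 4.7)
The plan is to analyze the exit rate from the local minimum $x$ directly in both cases, exploiting the fact that for a non-homogeneous continuous-time Markov chain with generator $Q_t$ starting from $x$, the survival probability in state $x$ is
\[
\mathbb{P}_x(X_u = x \text{ for all } u \in [0,t]) = \exp\Bigl(-\int_0^t \sum_{y \neq x} Q_s(x,y)\,ds\Bigr),
\]
so the key is to compute and compare the total outgoing rate $\sum_{y \neq x} M_{i,t}(x,y)$ at a local minimum for $i=1,2$.

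For \eqref{eq:escape1}, the point is that the improved variant erases the temperature dependence at local minima. Since $x \in U^{loc}_{min}$, for every $y \in N(x)$ we have $U(y) \geq U(x)$, so $(U(x)-U(y))_+ = 0$ and hence $M_{2,t}(x,y) = Q(x,y) \cdot \max\{1, e^{(U(x)-U(y))/T(t)}\} = Q(x,y)$. For $y \notin N(x)$, both $Q(x,y)$ and $M_{2,t}(x,y)$ vanish. Thus the total exit rate is the constant $\sum_{y \neq x} Q(x,y)$, which is strictly positive by the irreducibility of $Q$. The survival probability therefore equals $e^{-(\sum_{y \neq x} Q(x,y))t}$, and letting $t \to \infty$ yields \eqref{eq:escape1}.

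For \eqref{eq:escape2}, I exploit the opposite effect: under $M_1$, the temperature cripples the outgoing rate at a local minimum. For each $y \in N(x)$, $M_{1,t}(x,y) = Q(x,y) e^{-(U(y)-U(x))/T(t)}$, and by the definition of $\delta$ we have $U(y) - U(x) \geq \delta$. Therefore, setting $C := \sum_{y \in N(x)} Q(x,y)$,
\[
\sum_{y \neq x} M_{1,t}(x,y) \;\leq\; C \, e^{-\delta/T(t)}.
\]
Under $T(t) = (\delta - \epsilon)/\log(t+1)$, this upper bound becomes $C\,(t+1)^{-\delta/(\delta-\epsilon)}$, and since $\epsilon \in (0,\delta)$ we have $\delta/(\delta-\epsilon) > 1$, so the improper integral $\int_0^\infty (u+1)^{-\delta/(\delta-\epsilon)}\,du$ is finite. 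Consequently
\[
\mathbb{P}_x(X_t^{M_1} = x \text{ for all } t \geq 0) \;\geq\; \exp\Bigl(-C\int_0^\infty (u+1)^{-\delta/(\delta-\epsilon)}\,du\Bigr) \;>\; 0,
\]
which gives \eqref{eq:escape2}.

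No step really forms an obstacle here; the whole argument is a direct computation once one recognizes the correct structure. If anything, the most delicate point is merely the bookkeeping around the condition $\delta > \epsilon > 0$, needed to ensure both that $T(t)$ is a positive decreasing cooling schedule and that the polynomial exponent $\delta/(\delta-\epsilon)$ is strictly greater than one, which is exactly what makes the integrated exit rate finite.
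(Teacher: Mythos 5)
Your argument is correct and coincides with the paper's own proof: both parts rest on the survival-probability formula $\mathbb{P}_x(X_u = x\ \forall u\in[0,t]) = \exp\{\int_0^t M_{i,s}(x,x)\,ds\}$ (which the paper cites from van Dijk's uniformization result), the observation that $M_{2,s}(x,y)=Q(x,y)$ at a local minimum so the exit rate is constant, and the bound $M_{1,s}(x,y)\le e^{-\delta/T(s)}Q(x,y)$ combined with the convergence of $\int_0^\infty (s+1)^{-\delta/(\delta-\epsilon)}\,ds$. No substantive differences.
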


\begin{rk}[Basins of attraction and metastable behaviour of $X^{M_2}$]
	In Theorem \ref{thm:escape}, we only investigate and compare the behaviour of $X^{M_1}$ and $X^{M_2}$ escaping a single local minimum. In \cite{Choi20}, we study and compare the rate at which $X^{M_1}$ and $X^{M_2}$ escape the \textit{basins of attraction} of local minima when the temperature goes down to zero. Perhaps unsurprisingly, it turns out that the two hill-climbing constants $c_{M_1}$ and $c_{M_2}$ again play an important role in the analysis.
\end{rk}

\section{Proof of the main results}\label{sec:proof}

In this section we prove the main results presented in Section \ref{sec:main}.

%

\subsection{Proof of Proposition \ref{prop:comparecm1cm2}}

We first prove item \eqref{it:cm1cm2}. Fix $x \neq y \in \mathcal{X}$. Along a path $\gamma^{x,y} = \left(\gamma^{x,y}_i\right)_{i=1}^n$ that achieves the lowest elevation, we have
\begin{align*}
	H_1(x,y) &= \mathrm{Elev}_1(\gamma^{x,y}) \\
		   &= \max\{U(\gamma_i^{x,y});~\gamma_i^{x,y} \in \gamma^{x,y}\} \\
		   &\geq \max\{U(\gamma_{i-1}^{x,y}) \wedge U(\gamma_{i}^{x,y});~\gamma_i^{x,y} \in \gamma^{x,y}, i = 1,2,\ldots,n\} = \mathrm{Elev}_2(\gamma^{x,y}) \geq H_2(x,y).
\end{align*}
The desired result follows from subtracting both sides by $U(x) + U(y)$ and taking maximum over all $x \neq y$. In particular, when $U$ does not have repeated values, then the first inequality above becomes a strict inequality. To see item \eqref{it:cm1nonnegative}, first we note that $c_{M_1}$ is attained at some $x$ and $y_0 \in U_{min}$ according to \cite{HS88}. For any $\gamma \in \Gamma^{x,y_0}$, using the definition of elevation we note that
\begin{align*}
	\mathrm{Elev}_1(\gamma) \geq U(x),
\end{align*}
and so
$$c_{M_1} = H_1(x,y_0) - U(x) - U(y_0) \geq - U(y_0) = 0.$$

Next, we prove item \eqref{it:cm2negative}, which can be seen from the observation that
$$H_2(x,y) \leq \mathrm{Elev}_2(\gamma^{x,y}) \leq s^{x,y}.$$

Finally, we prove item \eqref{it:whencm2negative}. The equivalence between $c_{M_1} = 0$ and $U$ having only $1$ local minimum up to equivalence is proved in \cite[Proposition $3.1$]{I94}. When $U$ only has $1$ local minimum and $U$ does not have repeated values, then using item \eqref{it:cm1cm2} we have $c_{M_2} < c_{M_1} = 0$. To prove the other direction, assume the contrary that we have $c_{M_2} < 0$ and $c_{M_1} > 0$. Without loss of generality, assume that $y$ is a global minimum and we find a path $\gamma^{x,y}$ that attains $c_{M_1} > 0$, and we thus have
$$\max_i U(\gamma^{x,y}_i) > U(x).$$
On the other hand, since $c_{M_2} < 0$, along this selected path we have
$$U(x) \wedge U(\gamma_2^{x,y}) \leq \max _{i \geq 1} U\left(\gamma_{i}^{x, y}\right) \wedge U\left(\gamma_{i+1}^{x, y}\right)<U(x),$$
and we have $U(\gamma_2^{x,y}) < U(x)$. Consider the path $(\gamma^{x,y}_i)_{i=2}^n$:
$$\max_{i \geq 2} U(\gamma^{x,y}_i) - U(\gamma^{x,y}_2) > c_{M_1} = \max_{i \geq 1} U(\gamma^{x,y}_i) - U(x),$$
which contradicts the definition of $c_{M_1}$. Thus we have $c_{M_1} = 0$. Using \cite[Proposition $3.1$]{I94} again, $U$ only has $1$ local minimum up to equivalence.


\subsection{Proof of Theorem \ref{thm:stronge}}

Recall that we assume without loss of generality that $\min_y U(y) = 0$. In Subsection \ref{subsubsec:stronge}, we prove Theorem \ref{thm:stronge} item \eqref{it:stronge}, while in Subsection \ref{subsubsec:relativee}, we prove Theorem \ref{thm:stronge} item \eqref{it:relativee}.

\subsubsection{Proof of Theorem \ref{thm:stronge} item \eqref{it:stronge}}\label{subsubsec:stronge}

Assume for now that we have the following spectral gap lower bound for $M_{2,t}$. We will return to its proof at the end of this Subsection.

\begin{lemma}\label{lem:spectralgaplower}[Spectral gap estimate for $M_{2,t}$]
	There is a constant $A > 0$ such that for $t \geq 0$,
	$$\lambda_2(-M_{2,t}) \geq A \exp\bigg\{- \frac{c_{M_2}}{T(t)}\bigg\},$$
	where we recall that $c_{M_2}$ is defined in \eqref{eq:cm2}.
\end{lemma}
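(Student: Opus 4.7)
The plan is to establish a Poincaré inequality for $M_{2,t}$ via the canonical-paths (Jerrum--Sinclair) method, using the explicit quadratic form of $M_{2,t}$ from Lemma \ref{lem:M1M2} item \eqref{it:3} together with the variational characterization \eqref{eq:spectralgap}. For each ordered pair $(x,y)$ with $x \neq y$, I would select a path $\gamma^{x,y} \in \Gamma^{x,y}$ that realizes $H_2(x,y)$, i.e.\ with $\mathrm{Elev}_2(\gamma^{x,y}) = H_2(x,y)$; this is the critical choice, because these are the natural paths adapted to the Dirichlet form of $M_{2,t}$ (which weights edges by $\min\{U(a),U(b)\}$), whereas the classical bound for $M_{1,t}$ uses paths realizing $H_1$. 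Writing
$$\mathrm{Var}_{\pi_{T(t)}}(f) = \tfrac{1}{2}\sum_{x,y \in \mathcal{X}} (f(y)-f(x))^2 \pi_{T(t)}(x) \pi_{T(t)}(y),$$
telescoping $f(y)-f(x)$ along $\gamma^{x,y}$, applying Cauchy--Schwarz, and swapping the order of summation yields
$$\mathrm{Var}_{\pi_{T(t)}}(f) \leq \tfrac{1}{2}\sum_{a \neq b}(f(b)-f(a))^2 \sum_{(x,y):\,(a,b)\in\gamma^{x,y}} |\gamma^{x,y}|\,\pi_{T(t)}(x)\pi_{T(t)}(y).$$

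To compare with the Dirichlet form from Lemma \ref{lem:M1M2} item \eqref{it:3}, I would multiply and divide each edge term by $\pi_{T(t)}(a) M_{2,t}(a,b)$, so that the spectral gap bound via \eqref{eq:spectralgap} is reduced to controlling the edge-wise congestion
$$\rho(a,b;t) := \frac{\sum_{(x,y):\,(a,b)\in\gamma^{x,y}} |\gamma^{x,y}|\,\pi_{T(t)}(x)\pi_{T(t)}(y)}{\pi_{T(t)}(a) M_{2,t}(a,b)}$$
uniformly in $a \neq b$ with $Q(a,b)>0$. Substituting $\pi_{T(t)}(x)\pi_{T(t)}(y) = Z_{T(t)}^{-2} e^{-(U(x)+U(y))/T(t)}\pi(x)\pi(y)$ and $\pi_{T(t)}(a)M_{2,t}(a,b) = Z_{T(t)}^{-1} e^{-\min\{U(a),U(b)\}/T(t)} Q(a,b)\pi(a)$, the only temperature-dependent factor in $\rho(a,b;t)$ is
$$\exp\!\left\{\frac{\min\{U(a),U(b)\} - U(x) - U(y)}{T(t)}\right\}.$$
Because $(a,b)$ is an edge of the path $\gamma^{x,y}$ that realizes $H_2(x,y)$, one has $\min\{U(a),U(b)\} \leq \mathrm{Elev}_2(\gamma^{x,y}) = H_2(x,y)$, and by the definition \eqref{eq:cm2} of $c_{M_2}$,
$$\min\{U(a),U(b)\} - U(x) - U(y) \leq H_2(x,y) - U(x) - U(y) \leq c_{M_2}.$$
The remaining ingredients are all temperature-independent and bounded by positive constants depending only on $Q$, $\pi$ and $U$: the path lengths satisfy $|\gamma^{x,y}| \leq |\mathcal{X}|$, the weights $\pi(x)\pi(y)/\pi(a)$ and $1/Q(a,b)$ are bounded since $\pi$ is strictly positive and $Q(a,b)>0$ on edges of the chosen paths, and $Z_{T(t)} \in [\pi(U_{min}),\,1]$ using $\min_y U(y)=0$. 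Collecting these into a single constant $1/A$ delivers $\lambda_2(-M_{2,t}) \geq A \exp\{-c_{M_2}/T(t)\}$ as claimed.

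The main difficulty is exactly the choice of canonical paths: one must commit to paths that are optimal for the $\min$-based notion $\mathrm{Elev}_2$ rather than the usual $\max$-based $\mathrm{Elev}_1$. This asymmetry is precisely what accounts for the appearance of $c_{M_2}$ (and not $c_{M_1}$) in the exponent, and structurally it mirrors the replacement of $\max\{U(x),U(y)\}$ by $\min\{U(x),U(y)\}$ in the Dirichlet form passed from $M_{1,t}$ to $M_{2,t}$; verifying that such $H_2$-optimal paths exist is immediate from the irreducibility of $Q$ and finiteness of $\mathcal{X}$, so the remaining work is purely bookkeeping of constants.
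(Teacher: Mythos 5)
Your proposal is correct and follows essentially the same route as the paper: a canonical-paths Poincar\'e inequality built on $H_2$-optimal paths, with the edge weight $e^{-\min\{U(a),U(b)\}/T(t)}$ from the quadratic form of $M_{2,t}$ producing the factor $e^{c_{M_2}/T(t)}$ in the congestion ratio, and with $Z_{T(t)}\geq \pi(U_{min})$ absorbing the normalization. The paper organizes the bookkeeping with an explicit edge indicator $\chi_{z,w}$ and the symmetric weight $\alpha(z,w)=\pi(z)Q(z,w)$, but this is only a cosmetic difference from your presentation.
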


Our plan is to invoke Lemma \ref{lem:stronge} by taking 
\begin{align*}
	Q_t &= M_{2,t}, \quad \mu_t = \pi_{T(t)}, \quad T(t) = \dfrac{c_{M_2} + \epsilon}{\log(t+1)}, \\
	\gamma(t) &= -\left(\dfrac{d}{dt}T(t)\right)\dfrac{1}{T(t)^2} (\max_y U(y) - \min_y U(y)) = \dfrac{1}{(c_{M_2} + \epsilon)(t+1)} (\max_y U(y) - \min_y U(y)),
\end{align*}
where the form of $\gamma(t)$ follows from \cite[equation $(2.18)$]{Gidas85}. Using Lemma \ref{lem:spectralgaplower} twice leads to
\begin{align*}
	\int_0^{\infty} \lambda_2(-M_{2,t}) \, dt &\geq \int_0^{\infty} A \exp\bigg\{- \frac{c_{M_2}}{T(t)}\bigg\} \, dt \\
	&= A \int_0^{\infty} (t+1)^{-\frac{c_{M_2}}{c_{M_2} + \epsilon}} \, dt \\
	&\geq A \int_0^{\infty} \dfrac{1}{t+1} \, dt = \infty, \\
	\lim_{t \to \infty} \dfrac{\gamma(t)}{\lambda_2(-M_{2,t})} &\leq \dfrac{ \max_y U(y) - \min_y U(y)}{A(c_{M_2} + \epsilon)} \lim_{t \to \infty} \dfrac{1}{(t+1)^{\frac{\epsilon}{c_{M_2} + \epsilon}}} = 0.
\end{align*}
It is also well-known that 
$$\lim_{t \to \infty} ||\pi_{T(t)} - \pi_{min}||_{TV} = 0.$$
(This holds for any cooling schedule $T(t)$ that is positive and decreases to $0$ as $t \to \infty$, see e.g. \cite[Chapter $8$ Example $8.6$]{Bremaud99}.) It remains for us to prove Lemma \ref{lem:spectralgaplower}.

\begin{proof}[Proof of Lemma \ref{lem:spectralgaplower}]
	Our proof follows the same strategy as in \cite[Lemma $2.7$]{HS88} and \cite[Theorem $2.1$]{Lowe96}, except that now we are analyzing $M_{2,t}$ with quadratic form given as in Lemma \ref{lem:M1M2}. In view of the variational formula \eqref{eq:spectralgap}, we will prove that for all $f \in \ell^2{(\pi_{T(t)})}$ with $\pi_{T(t)}(f) = 0$,
	$$\dfrac{\langle -M_{2,t}f,f \rangle_{\pi_{T(t)}}}{\langle f,f \rangle_{\pi_{T(t)}}} \geq A \exp\bigg\{- \frac{c_{M_2}}{T(t)}\bigg\}.$$
	For $x,y \in \mathcal{X}$, we pick $\gamma^{x,y}$ that achieves $\mathrm{Elev}_2(\gamma^{x,y}) = H_2(x,y)$. Let $n(x,y)$ be the length of the path $\gamma^{x,y}$, and define
	$$N := \max_{x,y \in \mathcal{X}} n(x,y).$$
	For $z, w \in \mathcal{X}$, we denote the indicator function $\chi_{z,w}$ to be
	\begin{align*}
		\chi_{z,w}(x,y) = \begin{cases}
		1, \quad \text{for some}~ 0 \leq i < n(x,y), ~ \gamma^{x,y}_i = z, \gamma^{x,y}_{i+1} = w, \\
		0, \quad \text{otherwise.}
		\end{cases}
	\end{align*}
	Let $\alpha(x,y) := \pi(x) Q(x,y)$. Reversibility of $Q$ implies $\alpha$ is symmetric. If $\alpha(z,w) = 0$, then $\chi_{z,w}(x,y) = 0$ for all $x,y$. In the following we take $\chi_{z,w}(x,y)/\alpha(z,w) = 0$ if $\chi_{z,w}(x,y) = 0$. We see that
	\begin{align*}
		2 \langle f,f \rangle_{\pi_{T(t)}} &= \sum_{x,y} (f(y)-f(x))^2 \pi_{T(t)}(x) \pi_{T(t)}(y) \\
		&= \sum_{x,y} \left(\sum_{i=1}^{n(x,y)} f(\gamma^{x,y}_{i}) - f(\gamma^{x,y}_{i-1})\right)^2 \pi_{T(t)}(x) \pi_{T(t)}(y) \\
		&\leq \sum_{x,y} n(x,y)\sum_{i=1}^{n(x,y)} (f(\gamma^{x,y}_{i}) - f(\gamma^{x,y}_{i-1}))^2 \pi_{T(t)}(x) \pi_{T(t)}(y) \\
		&\leq N \sum_{x,y} \sum_{w,z} \chi_{z,w}(x,y) (f(z) - f(w))^2 \left(\alpha(z,w) / Z_{T(t)} \right) e^{-\frac{\min\{U(z),U(w)\}}{T(t)}} \dfrac{\pi_{T(t)}(x) \pi_{T(t)}(y) Z_{T(t)}}{\alpha(z,w) e^{-\frac{\min\{U(z),U(w)\}}{T(t)}}} \\
		&\leq N \left(\max_{z,w} \sum_{x,y} \chi_{z,w}(x,y)  \dfrac{\pi_{T(t)}(x) \pi_{T(t)}(y) Z_{T(t)}}{\alpha(z,w) e^{-\frac{\min\{U(z),U(w)\}}{T(t)}}} \right) \\
		&\quad \times \sum_{z,w} (f(z) - f(w))^2 \left(\alpha(z,w) / Z_{T(t)} \right) e^{-\frac{\min\{U(z),U(w)\}}{T(t)}} \\
		&= 2N  \left(\max_{z,w} \sum_{x,y} \chi_{z,w}(x,y)  \dfrac{\pi_{T(t)}(x) \pi_{T(t)}(y) Z_{T(t)}}{\alpha(z,w) e^{-\frac{\min\{U(z),U(w)\}}{T(t)}}} \right) \langle -M_{2,t}f,f \rangle_{\pi_{T(t)}}
	\end{align*}
	where the first inequality follows from Cauchy-Schwartz inequality, the second inequality follows from $n(x,y) \leq N$, and the last equality follows from the quadratic form given in Lemma \ref{lem:M1M2}. Now note that
	\begin{align*}
	 \chi_{z,w}(x,y)  \dfrac{\pi_{T(t)}(x) \pi_{T(t)}(y) Z_{T(t)}}{\alpha(z,w) e^{-\frac{\min\{U(z),U(w)\}}{T(t)}}} &= \dfrac{\chi_{z,w}(x,y)}{\alpha(z,w)} \dfrac{\pi(x)\pi(y)}{Z_{T(t)}} e^{\frac{\min\{U(z),U(w)\} - U(x) - U(y)}{T(t)}} \\
	 &\leq \exp\bigg\{\frac{c_{M_2}}{T(t)}\bigg\} \dfrac{\chi_{z,w}(x,y)}{\alpha(z,w)} \dfrac{\pi(x)\pi(y)}{\pi(U_{min})}.
	\end{align*}
	Desired result follows by taking 
	$$A^{-1} = N \left(\max_{z,w} \sum_{x,y} \dfrac{\chi_{z,w}(x,y)}{\alpha(z,w)} \dfrac{\pi(x)\pi(y)}{\pi(U_{min})}\right).$$
\end{proof}

\subsubsection{Proof of Theorem \ref{thm:stronge} item \eqref{it:relativee}}\label{subsubsec:relativee}

Our plan is to invoke Lemma \ref{lem:relativee} and identify the appropriate $a_{T(t)}$. The spectral gap estimate of $M_{2,t}$ in Lemma \ref{lem:spectralgaplower} will again play a crucial role. First, we compute 

\begin{align}\label{eq:dentropy}
	\dfrac{d}{dt} \mathrm{Ent}_{\pi_{T(t)}}\left(P_{0,t}^{M_2}(x,\cdot)\right) = \sum_{y} \left(\log \left(\dfrac{P_{0,t}^{M_2}(x,y)}{\pi_{T(t)}(y)} \right) \dfrac{d}{dt} P_{0,t}^{M_2}(x,y) - \dfrac{P_{0,t}^{M_2}(x,y)}{\pi_{T(t)}(y)} \dfrac{d}{dt} \pi_{T(t)}(y)\right).
\end{align}

Using the Kolmogorov forward equation we see that

\begin{align*}
\dfrac{d}{dt} P_{0,t}^{M_2}(x,y) &= \sum_k P_{0,t}^{M_2}(x,k) M_{2,t}(k,y) \\
								 &= \sum_{k \neq y} M_{2,t}(k,y) P_{0,t}^{M_2}(x,k) - \sum_{k \neq y} P_{0,t}^{M_2}(x,y) M_{2,t}(y,k) \\
								 &= \sum_{k \neq y} \pi_{T(t)}(k) M_{2,t}(k,y) \left(\dfrac{P_{0,t}^{M_2}(x,k)}{\pi_{T(t)}(k)} - \dfrac{P_{0,t}^{M_2}(x,y)}{\pi_{T(t)}(y)} \right) \\
								 &= \sum_{k \neq y} \pi_{T(t)}(k) M_{2,t}(k,y) \left(f(k) - f(y) \right), \\
\end{align*}
where in the last equality we let 
\begin{align}\label{eq:f}
	f(k) := \dfrac{P_{0,t}^{M_2}(x,k)}{\pi_{T(t)}(k)}.
\end{align}

Substituting these back into \eqref{eq:dentropy} yields
\begin{align} \label{eq:immediate}
\dfrac{d}{dt} \mathrm{Ent}_{\pi_{T(t)}}\left(P_{0,t}^{M_2}(x,\cdot)\right) &= \sum_{y} \log f(y) \sum_{k \neq y} \pi_{T(t)}(k) M_{2,t}(k,y) \left(f(k) - f(y) \right) - \sum_y  \dfrac{P_{0,t}^{M_2}(x,y)}{\pi_{T(t)}(y)} \dfrac{d}{dt} \pi_{T(t)}(y). \nonumber \\
&\leq -2 C \sum_{k,y}  \pi_{T(t)}(k) M_{2,t}(k,y) \left(\sqrt{f}(k) - \sqrt{f}(y)\right)^2 - \sum_y  \dfrac{P_{0,t}^{M_2}(x,y)}{\pi_{T(t)}(y)} \dfrac{d}{dt} \pi_{T(t)}(y) \nonumber\\
&= -4 C \langle -M_{2,t}\sqrt{f},\sqrt{f} \rangle_{\pi_{T(t)}}  - \sum_y \dfrac{P_{0,t}^{M_2}(x,y)}{\pi_{T(t)}(y)} \dfrac{d}{dt} \pi_{T(t)}(y),
\end{align}
where the first inequality follows from \cite[Proposition $3$]{Miclo92} and $C$ is a constant that depends on the size of $\mathcal{X}$. Now, for this choice of $f$ in \eqref{eq:f}, $p \in (2, \infty)$ and Lemma \ref{lem:spectralgaplower},
\begin{align*}
	||\sqrt{f} - \pi(\sqrt{f}) ||_{\ell^p(\pi)}^2 &\leq ||\sqrt{f} - \pi(\sqrt{f}) ||_{\ell^2(\pi)}^2 \leq \dfrac{\langle -Q \sqrt{f}, \sqrt{f} \rangle_{\pi}}{\lambda_2(-Q)}, \\
	||\sqrt{f} - \pi_{T(t)}(\sqrt{f}) ||_{\ell^2(\pi_{T(t)})}^2 &\leq \dfrac{\langle -M_{2,t} \sqrt{f}, \sqrt{f} \rangle_{\pi_{T(t)}}}{\lambda_2(-M_{2,t})} \leq A \exp\bigg\{ \frac{c_{M_2}}{T(t)}\bigg\} \langle -M_{2,t} \sqrt{f}, \sqrt{f} \rangle_{\pi_{T(t)}}, 
\end{align*}
where $A$ is the constant that appears in Lemma \ref{lem:spectralgaplower}. As a result, equation $(3.12)$ and $(3.15)$ in \cite{HS88} are fulfilled. By \cite[Theorem $3.21$]{HS88}, there is a constant $D = D(p,c_{M_2},\lambda_2(-Q),\max_y U(y) - \min_y U(y)) < \infty$ such that
\begin{align}\label{eq:entropyspectral}
	\langle -M_{2,t}\sqrt{f},\sqrt{f} \rangle_{\pi_{T(t)}} \geq \dfrac{(1+1/T(t))^{-1} \exp\{-c_{M_2}/T(t)\}}{D} \mathrm{Ent}_{\pi_{T(t)}}\left(P_{0,t}^{M_2}(x,\cdot)\right).
\end{align}
Combining \eqref{eq:entropyspectral} and \eqref{eq:immediate} yields
\begin{align*}
	\dfrac{d}{dt} \mathrm{Ent}_{\pi_{T(t)}}\left(P_{0,t}^{M_2}(x,\cdot)\right) &\leq - \dfrac{4C(1+1/T(t))^{-1} \exp\{-c_{M_2}/T(t)\}}{D} \mathrm{Ent}_{\pi_{T(t)}}\left(P_{0,t}^{M_2}(x,\cdot)\right) \\
	&\quad + (\max_yU(y) - \min_y U(y))\left|\dfrac{d}{dt}T(t)\right| \dfrac{1}{T(t)^2},
\end{align*}
where the second term follows from \cite[equation $2.18$]{Gidas85}. This is exactly the form presented in Lemma \ref{lem:relativee} with the choice of 
\begin{align*}
	a_{T(t)} &= \dfrac{D (1+1/T(t)) \exp\{c_{M_2}/T(t)\}}{4C}, \quad R = \max_yU(y) - \min_y U(y).
\end{align*}
Under the logarithmic cooling schedule of the form $T(t) = \dfrac{c_{M_2} + \epsilon}{\log(t+1)}$, we calculate that
\begin{align*}
	\int_0^{\infty} a_{T(t)}^{-1}\,dt &\geq \dfrac{4C}{D} \int_0^{\infty} \dfrac{c_{M_2} + \epsilon}{c_{M_2} + \epsilon + \log(t+1)} \dfrac{1}{t+1} \, dt = \infty. \\
	\lim_{t \to \infty} \left|\dfrac{d}{dt}T(t)\right| \dfrac{a_{T(t)}}{T(t)^2} &= \lim_{t \to \infty} \dfrac{D(c_{M_2} + \epsilon + \log(t+1))}{4C (c_{M_2} + \epsilon)^2 (t+1)^{1+\frac{c_{M_2}}{c_{M_2}+\epsilon}}} = 0
\end{align*}
Desired result follows from Lemma \ref{lem:relativee}.

\subsection{Proof of Theorem \ref{thm:stronge2}}

Similar to Subsection \ref{subsubsec:stronge}, our plan is to invoke Lemma \ref{lem:stronge} by taking 
\begin{align*}
Q_t &= M_{2,t}, \quad \mu_t = \pi_{T(t)}, \\
\gamma(t) &= -\left(\dfrac{d}{dt}T(t)\right)\dfrac{1}{T(t)^2} (\max_y U(y) - \min_y U(y)) = \dfrac{1}{(c_{M_2} + \epsilon)(t+1)} (\max_y U(y) - \min_y U(y)),
\end{align*}
Using Lemma \ref{lem:spectralgaplower} leads to
\begin{align*}
\int_0^{\infty} \lambda_2(-M_{2,t}) \, dt &\geq \int_0^{\infty} A \exp\bigg\{- \frac{c_{M_2}}{T(t)}\bigg\} \, dt \\
&\geq A \int_0^{\infty} 1 \, dt \\
&= \infty,
\end{align*}
where we use the assumption that $c_{M_2} \leq 0$ in the second inequality above. It remains to check
$$\lim_{t \to \infty} \dfrac{\gamma(t)}{\lambda_2(-M_{2,t})} \leq -A^{-1}(\max_y U(y) - \min_y U(y))\lim_{t \to \infty} \left(\dfrac{d}{dt} T(t)\right) \dfrac{e^{\frac{c_{M_2}}{T(t)}}}{T(t)^2}= 0,$$
where we use again Lemma \ref{lem:spectralgaplower} in the first inequality, and the equality follows from the assumption on the cooling schedule \eqref{eq:fastcool}. We proceed to show the cooling schedules proposed satisfy \eqref{eq:fastcool}:
\begin{enumerate}
	\item $T(t) = (t+1)^{-\alpha}$ 
	\begin{align*}
		\lim_{t \to \infty} \left(\dfrac{d}{dt} T(t)\right) \dfrac{1}{T(t)^2} = \lim_{t \to \infty} \dfrac{-\alpha}{(1+t)^{1-\alpha}} = 0.
	\end{align*}
	\item $T(t) = \left(\log(t+1)\right)^{-k}$
	\begin{align*}
	\lim_{t \to \infty} \left(\dfrac{d}{dt} T(t)\right) \dfrac{1}{T(t)^2} = \lim_{t \to \infty} \dfrac{-k (\log(t+1))^{k-1}}{t+1} = 0.
	\end{align*}
	\item $T(t) = (t+1)^{-\alpha} \left(\log(t+1)\right)^{-1}$
	\begin{align*}
	\lim_{t \to \infty} \left(\dfrac{d}{dt} T(t)\right) \dfrac{1}{T(t)^2} = \lim_{t \to \infty} - \dfrac{1}{(1+t)^{1-\alpha}} - \dfrac{\alpha \log(t+1)}{(1+t)^{1-\alpha}} = 0.
	\end{align*}
	\item $T(t) = e^{-t}$
	\begin{align*}
	\lim_{t \to \infty} \left(\dfrac{d}{dt} T(t)\right) \dfrac{e^{\frac{c_{M_2}}{T(t)}}}{T(t)^2} = \lim_{t \to \infty} - e^{t} e^{c_{M_2} e^{t}} = 0,
	\end{align*}
	where we use $c_{M_2} < 0$ in the last equality.
	\item $T(t) = (t+1)^{-\alpha}$
	\begin{align*}
	\lim_{t \to \infty} \left(\dfrac{d}{dt} T(t)\right) \dfrac{e^{\frac{c_{M_2}}{T(t)}}}{T(t)^2} = \lim_{t \to \infty} - \alpha(t+1)^{\alpha-1} e^{c_{M_2}(t+1)^{\alpha}} = 0,
	\end{align*}
	where we use $c_{M_2} < 0$ in the last equality.
\end{enumerate}

\subsection{Proof of Theorem \ref{thm:relativee2}}

Our proof follows from the proof of Theorem \ref{thm:stronge} item \eqref{it:relativee} in Subsection \ref{subsubsec:relativee}. Recall that we have showed
\begin{align*}
\dfrac{d}{dt} \mathrm{Ent}_{\pi_{T(t)}}\left(P_{0,t}^{M_2}(x,\cdot)\right) &\leq - a_{T(t)}^{-1} \mathrm{Ent}_{\pi_{T(t)}}\left(P_{0,t}^{M_2}(x,\cdot)\right) + R\left|\dfrac{d}{dt}T(t)\right| \dfrac{1}{T(t)^2}, \\
a_{T(t)} &= \dfrac{D (1+1/T(t)) \exp\{c_{M_2}/T(t)\}}{4C}, \quad R = \max_yU(y) - \min_y U(y).\\
\end{align*}
Equations \eqref{eq:fastcoolrelativee} and \eqref{eq:fastcoolrelativee2} lead to
\begin{align*}
\int_0^{\infty} a_{T(t)}^{-1} \,dt &= \infty, \quad
\lim_{t \to \infty} \left|\dfrac{d}{dt}T(t)\right| \dfrac{a_{T(t)}}{T(t)^2} = 0.
\end{align*}
Desired result follows from Lemma \ref{lem:relativee}. We proceed to show the cooling schedules proposed satisfy \eqref{eq:fastcoolrelativee} and \eqref{eq:fastcoolrelativee2}:
\begin{enumerate}
	\item $T(t) = (t+1)^{-\alpha}$, $\alpha \in (0,1)$,
	\begin{align*}
	\int_0^{\infty} (1+1/T(t))^{-1} \exp\{-c_{M_2}/T(t)\} \,dt &= \int_0^{\infty} \dfrac{1}{1+(1+t)^{\alpha}} \, dt \geq \int_0^{\infty} \dfrac{1}{2+t} \, dt = \infty, \\
	\lim_{t \to \infty} \left|\dfrac{d}{dt}T(t)\right| \dfrac{\exp\{c_{M_2}/T(t)\}}{T(t)^2 + T(t)^3} &= \lim_{t \to \infty} \dfrac{\alpha}{(1+t)^{1-\alpha} + (1+t)^{1-2\alpha}} = 0, 
	\end{align*}
	\item $T(t) = e^{-t}$
	\newline	
	First, we note that the integrand 
	$$(1+1/T(t))^{-1} \exp\{-c_{M_2}/T(t)\} = \dfrac{e^{-c_{M_2}e^t}}{1+e^t} \to \infty \quad \text{as}~ t \to \infty,$$
	where we use $c_{M_2} < 0$. As a result,
	\begin{align*}
	\int_0^{\infty} (1+1/T(t))^{-1} \exp\{-c_{M_2}/T(t)\} \,dt &= \infty, \\
	\lim_{t \to \infty} \left|\dfrac{d}{dt}T(t)\right| \dfrac{\exp\{c_{M_2}/T(t)\}}{T(t)^2 + T(t)^3} &= \lim_{t \to \infty} \dfrac{e^{c_{M_2}e^t}}{e^{-t} + e^{-2t}} = 0. 
	\end{align*}
%
\end{enumerate}

\subsection{Proof of Corollary \ref{thm:finite}}

The result follows immediately from applying \cite[Corollary $2.2.8$]{DM94}, once we check the conditions there are satisfied. \cite[Equation $(2.2.2)$]{DM94} is satisfied with our $R$, while \cite[Equation $(2.2.6)$]{DM94} is satisfied with our $K$ and $B$, see also \cite[Example $3.1.9$]{DM94}. Finally, \cite[Equation $(2.2.1)$]{DM94} is fulfilled in view of our spectral gap lower bound in Lemma \ref{lem:spectralgaplower}.

\subsection{Proof of Theorem \ref{thm:escape}}

We first prove \eqref{eq:escape1}. When $x \in U^{loc}_{min}$, then $(U(x) - U(y))_+ = 0$ for $y \in N(x)$, and so by the definition of $M_{2,s}$ in Definition \ref{def:M2} we have, for $s \geq 0$,
\begin{align*}
	M_{2,s}(x,y) &= Q(x,y), \\
	M_{2,s}(x,x) &= - \sum_{y \neq x} Q(x,y),
\end{align*}
where we note that the right hand side of these two equations are independent of $s$. According to \cite[equation $(2.6)$]{vDN92}
\begin{align*}
	\mathbb{P}_x(X_s^{M_2} = x ~\forall s \in [0,t]) = \exp\bigg\{\int_0^{t} M_{2,s}(x,x)\, ds \bigg\} = \exp\bigg\{ - \sum_{y \neq x} Q(x,y) t \bigg\}.
\end{align*}
Taking $t \to \infty$ gives \eqref{eq:escape1}. Next, we prove \eqref{eq:escape2}. Using the definition of $\delta > 0$ and $M_{1,t}$, we see that for $x \in U^{loc}_{min}, y \in N(x)$,
\begin{align*}
M_{1,s}(x,y) &\leq e^{-\frac{\delta}{T(s)}}Q(x,y), \\
M_{1,s}(x,x) &\geq - \sum_{y \neq x} e^{-\frac{\delta}{T(s)}}Q(x,y).
\end{align*}
It follows again from \cite[equation $(2.6)$]{vDN92} that
\begin{align*}
\mathbb{P}_x(X_s^{M_1} = x ~\forall s \in [0,t]) = \exp\bigg\{\int_0^{t} M_{1,s}(x,x)\, ds \bigg\} &\geq \exp\bigg\{ - \left(\sum_{y \neq x} Q(x,y) \right) \int_0^{t} e^{-\frac{\delta}{T(s)}}\, ds\bigg\} \\
&= \exp\bigg\{ - \left(\sum_{y \neq x} Q(x,y) \right) \int_0^{t} \dfrac{1}{(s+1)^{\frac{\delta}{\delta-\epsilon}}}\, ds\bigg\}.
\end{align*}
Taking $t \to \infty$, the equation becomes
\begin{align*}
\mathbb{P}_x(X_s^{M_1} = x ~\forall s \geq 0) \geq \exp\bigg\{ - \left(\sum_{y \neq x} Q(x,y) \right) \int_0^{\infty} \dfrac{1}{(s+1)^{\frac{\delta}{\delta-\epsilon}}}\, ds\bigg\} > 0,
\end{align*}
where we use the fact that $\int_0^{\infty} \dfrac{1}{(s+1)^{\frac{\delta}{\delta-\epsilon}}}\, ds < \infty$ as $$\sum_{k=1}^{\infty} \dfrac{1}{k^{\frac{\delta}{\delta-\epsilon}}} < \infty.$$

\section{Two algorithms to simulate $X^{M_2}$}\label{sec:algo}

The objective of this section is to present two algorithms for simulating $X^{M_2}$. On one hand, $X^{M_1}$ is easy to simulate by means of acceptance-rejection: the proposal chain $Q$ proposes a move which is accepted according to certain probability that depends on the target function $U$ and temperature $T(t)$. Such acceptance-rejection procedure seems to be hard to adapt to the setting of $X^{M_2}$, as we are taking $\max$ instead of $\min$ in its definition. As simulating $X^{M_2}$ boils down to simulating a non-homogeneous Markov chain, the algorithms proposed below are two among many possible ways that will serve the purpose.

In the first proposed algorithm, we simulate $X^{M_2}$ directly. For $x \in \mathcal{X}$ and $y \in N(x)$, the neighbours of $x$ as introduced in \eqref{eq:neighbour}, denote $(E_y)_{y \in N(x)}$ to be independent and identically distributed exponential random variables with mean $1$. Starting from time $t_0 \geq 0$, for each $y$ we solve for $T_y$ in 
\begin{align}\label{eq:algo1}
\int_{t_0}^{T_y} M_{2,s}(x,y)\,ds = E_y.
\end{align}
Let $T_{y^*} = \min_{y \in N(x)}{T_y}$, then $X^{M_2}$ will jump from state $x$ at time $t_0$ to state $y^*$ at time $T_{y^*}$. For example, if we adapt the logarithmic cooling schedule $T(t) = c/\ln(t+1)$ with $c > c_{M_2}$, then upon solving the integration in \eqref{eq:algo1} $T_y$ satisfies
$$\dfrac{Q(x,y) \left((T_y+1)^{\frac{(U(x)-U(y))_+}{c}+1} - (t_0+1)^{\frac{(U(x)-U(y))_+}{c}+1}\right)}{\frac{(U(x)-U(y))_+}{c}+1} = E_y,$$
which gives
$$T_y = \left(E_y \dfrac{\left(\frac{(U(x)-U(y))_+}{c}+1\right)}{Q(x,y)} + (t_0+1)^{\frac{(U(x)-U(y))_+}{c}+1}\right)^{\frac{1}{\frac{(U(x)-U(y))_+}{c}+1}} - 1.$$

We now formally state the first proposed algorithm:

\begin{algorithm}[H]\label{algo:M21}
	\DontPrintSemicolon
	
	\KwInput{Proposal reversible generator $Q$, cooling schedule $T(t)$, target function $U$, initial state $x$}
	Sample $(E_y)_{y \in N(x)}$ to be independent and identically distributed exponential random variables with mean $1$.\;
	For each $y \in N(x)$, compute $T_y$ as in \eqref{eq:algo1}.\;
	Let $T_{y^*} = \min_{y \in N(x)}{T_y}$, then $X^{M_2}$ will jump from state $x$ at time $t_0$ to state $y^*$ at time $T_{y^*}$.\;
	Replace $t_0$ by $T_{y^*}$ and $x$ by $y^*$. Repeat.\;
	\caption{An algorithm to simulate $(X^{M_2}_t)_{t = t_0}^{t_1}$}
\end{algorithm}

For Algorithm \ref{algo:M21}, for it to be computationally feasible it requires the number of neighbours $|N(x)|$ of a state $x$ to be small. In addition, it requires solving equation \eqref{eq:algo1} which may not be tractable for some cooling schedules. Also, when compared with the acceptance-rejection procedure of $X^{M_1}$, the computational cost is increased since we need to evaluate $U(y)$ for all $y \in N(x)$.

In the second proposed algorithm, we resort to the idea of uniformization of non-homogeneous Markov chain as introduced in \cite{vDN92}. Roughly speaking, a non-homogeneous continuous-time Markov chain can be thought as a discrete-time non-homogeneous Markov chain time-changed by an associated Poisson process. This allows us to simulate $X^{M_2}$ by its discrete-time counterpart and its affiliated Poisson process. 

Suppose that we would like to simulate $X^{M_2} = (X^{M_2}_t)_{t = t_0}^{t_1}$  with transition semigroup $P^{M_2} = (P^{M_2}_{s,t})_{0 \leq s \leq t}$ on the interval $(t_0,t_1)$. As we assume the cooling schedule $T(t)$ is decreasing toward $0$, by recalling that $R = \max_x U(x) - \min_x U(x)$ as in \eqref{eq:R},
the transition rate at any given state $x$ and any $t \in (t_0,t_1)$ is bounded above by
\begin{align}\label{eq:M}
|M_{2,t}(x,x)| \leq e^{\frac{R}{T(t_1)}} \sum_{y \neq x} Q(x,y) \leq e^{\frac{R}{T(t_1)}} \max_x |Q(x,x)| =: M.
\end{align}
Now, note that 
\begin{align}\label{eq:uniformizedM2}
\mathcal{P}^{M_2}_t := I + \dfrac{1}{M} M_{2,t}
\end{align}
is a valid reversible (with respect to $\pi_{T(t)}$) stochastic matrix. According to \cite[Theorem $3.1$]{vDN92}, we have

\begin{theorem}[Continuous-time non-homogeneous Markov chain as time-changed discrete-time non-homogeneous Markov chain]\label{thm:uniform}
	\begin{align*}
		P^{M_2}_{t_0,t_1} = \sum_{k=0}^{\infty} \dfrac{e^{-(t_1-t_0)M}\left((t_1-t_0)M \right)^k}{k!} \int_{t_0}^{t_1} \ldots \int_{t_0}^{t_1} \mathcal{P}^{M_2}_{n_1} \mathcal{P}^{M_2}_{n_2} \ldots \mathcal{P}^{M_2}_{n_k} \, d \overline{H}(n_1,\ldots,n_k),
	\end{align*}
	where $d \overline{H}(n_1,\ldots,n_k)$ is the density of the order statistics $n_1 \leq n_2 \leq \ldots \leq n_k$ of a $k$-dimensional uniform distribution in $[t_0,t_1] \times \ldots \times [t_0,t_1]$, and $\mathcal{P}^{M_2}_{n_1} \mathcal{P}^{M_2}_{n_2} \ldots \mathcal{P}^{M_2}_{n_k}$ is the standard matrix product.
\end{theorem}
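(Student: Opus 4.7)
The plan is to establish the formula via the uniformization construction of the non-homogeneous Markov chain $X^{M_2}$, realizing it as a discrete-time jump chain subordinated to a homogeneous Poisson process of rate $M$. First I would verify that the candidate stochastic matrix $\mathcal{P}^{M_2}_t = I + \frac{1}{M} M_{2,t}$ introduced in \eqref{eq:uniformizedM2} is indeed a valid transition matrix on $[t_0,t_1]$: off-diagonal entries $\frac{1}{M}Q(x,y) e^{(U(x)-U(y))_+/T(t)} \geq 0$, diagonal entries $1 + \frac{1}{M}M_{2,t}(x,x) \geq 0$ by the uniform upper bound \eqref{eq:M}, and rows summing to $1$ because $M_{2,t}$ has zero row sums.

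Next I would introduce an auxiliary process $\widetilde{X}$ whose jump epochs $(\tau_k)_{k \geq 1}$ form the points of a homogeneous Poisson process of rate $M$ on $[t_0,\infty)$, and which, at each $\tau_k$ with current state $x$, moves to $y$ with probability $\mathcal{P}^{M_2}_{\tau_k}(x,y)$ (self-loops allowed, corresponding to ``virtual'' jumps). I would then check that $\widetilde{X}$ has infinitesimal generator $M_{2,t}$ at time $t$: for $y \neq x$ and small $h$, conditioning on there being a single Poisson epoch at some time $\tau \in [t,t+h]$ produces a transition rate $M \cdot \frac{1}{M} M_{2,t}(x,y) + o(1) = M_{2,t}(x,y)$, and similarly for the diagonal. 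By uniqueness of solutions to the Kolmogorov forward equation on a finite state space (a linear ODE system with continuous coefficients), this identifies the semigroup of $\widetilde{X}$ with $P^{M_2}$.

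To obtain the closed-form expression, I would then compute the semigroup of $\widetilde{X}$ on $[t_0,t_1]$ by conditioning on the number $k$ of Poisson epochs in $[t_0,t_1]$. The number of epochs is Poisson with mean $(t_1-t_0)M$, yielding the prefactor $e^{-(t_1-t_0)M}((t_1-t_0)M)^k/k!$. Conditional on $k$ epochs, the ordered epoch times are distributed as the order statistics of $k$ i.i.d.\ uniforms on $[t_0,t_1]$, with density $d\overline{H}(n_1,\ldots,n_k)$, and the state at time $t_1$ is obtained by applying the transition kernels $\mathcal{P}^{M_2}_{n_1},\mathcal{P}^{M_2}_{n_2},\ldots,\mathcal{P}^{M_2}_{n_k}$ in time order. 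Summing over $k$ reproduces the stated expansion.

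The main subtleties I anticipate are twofold. First, because the kernels $\mathcal{P}^{M_2}_{s}$ and $\mathcal{P}^{M_2}_{s'}$ need not commute for $s \neq s'$, I must carefully respect the time-ordering prescribed by the order statistics when assembling the matrix product; this is precisely why $d\overline{H}$ is the order-statistic density rather than the product Lebesgue density. Second, justifying that the infinite series satisfies the Kolmogorov equation (and can be differentiated term-by-term in $t_1$) requires a uniform-convergence argument, but since $\lVert \mathcal{P}^{M_2}_s \rVert_\infty \leq 1$ for each $s$, the $k$-th term is bounded in operator norm by $e^{-(t_1-t_0)M}((t_1-t_0)M)^k/k!$, so absolute and uniform convergence follow from a Weierstrass $M$-test and legitimize all interchanges of summation, integration, and differentiation.
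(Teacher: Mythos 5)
Your proof is correct, but it takes a different route from the paper: the paper does not prove this theorem at all. It verifies that $\mathcal{P}^{M_2}_t = I + \frac{1}{M}M_{2,t}$ is a valid stochastic matrix (using the uniform bound \eqref{eq:M}, which relies on the monotonicity of $T$ so that $T(t)\geq T(t_1)$ on $[t_0,t_1]$) and then simply invokes the general uniformization theorem for non-homogeneous continuous-time Markov chains from van Dijk's work, cited as \cite[Theorem $3.1$]{vDN92}. What you have written is essentially a self-contained reproof of that cited result: subordinating the time-dependent jump kernels $\mathcal{P}^{M_2}_{\tau_k}$ to a rate-$M$ Poisson process, identifying the generator of the resulting process with $M_{2,t}$ via the forward equation and uniqueness for finite linear ODE systems, and then conditioning on the Poisson count to get the series with the order-statistic density $d\overline{H}$. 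Your attention to the non-commutativity of the kernels (hence the need for time-ordering) and to the Weierstrass-type domination of the series are exactly the points that make the external theorem go through. The paper's approach buys brevity by outsourcing the probabilistic construction; yours buys a self-contained argument and makes explicit why the hypotheses (uniform rate bound, continuity of $t\mapsto M_{2,t}$ inherited from the cooling schedule) are what the uniformization machinery actually needs. One minor point worth stating explicitly if you keep your version: the single-epoch conditioning step in your generator identification uses continuity of $s\mapsto M_{2,s}$ on $[t_0,t_1]$, which should be recorded as following from the assumed regularity of $T(\cdot)$.
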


We now formally state the second proposed algorithm:

\begin{algorithm}[H]\label{algo:M2}
	\DontPrintSemicolon
	
	\KwInput{Proposal reversible generator $Q$, cooling schedule $T(t)$, target function $U$, an estimate $R$ of the range of $U$ as in \eqref{eq:R}, an interval $(t_0,t_1)$ to simulate and an initial state $x_0$}
	Calculate $M_{2,t}$ as in Definition \ref{def:M2},, $M$ as in \eqref{eq:M} and $\mathcal{P}^{M_2}_t$ as in \eqref{eq:uniformizedM2}.\;
	Sample a number $N$ from the Poisson distribution with mean $(t_1-t_0)M$.\;
	Draw $N$ random numbers uniformly distributed on the interval $(t_0,t_1)$, and sort them in ascending order. Label them as $n_1 \leq n_2 \leq \ldots \leq n_N$.\;
	\For{$i = 1;\ i \leq N;\ i = i + 1$}{
		With initial state $x_{i-1}$, simulate one-step of a Markov chain with matrix $\mathcal{P}^{M_2}_{n_i}$.\;
		Set $x_i$ to be the state after one-step.\;
	}
	\KwOutput{$x_N$}
	\caption{An algorithm to simulate $(X^{M_2}_t)_{t = t_0}^{t_1}$ by uniformization}
\end{algorithm}

One obvious drawback in applying Algorithm \ref{algo:M2} is that it requires knowledge on estimating the range $R$ of the target function $U$. Such knowledge is not required in simulating $X^{M_1}$, which is simply an acceptance-rejection procedure. Another disadvantage is that it requires simulating a step of the Markov chain with matrix $\mathcal{P}^{M_2}_{n_i}$. While it is perhaps feasible with nearest-neighbour or single-spin type proposal chain $Q$, this step may be computationally expensive when the state space is exponentially large.

\section{Epilogue: Is $X^{M_2}$ really an acceleration?}\label{sec:epilogue}

As emphasized in Section \ref{sec:main}, the speed-up of $X^{M_2}$, in the sense of adapting a faster cooling schedule, is prominent in the case of $c_{M_2} = 0 < c_{M_1}$, see Theorem \ref{thm:stronge2}. As an illustration of this case, we recall the plot in Figure \ref{fig:c}. In this section, we will mention two shortcomings in using $X^{M_2}$.

Perhaps the most important drawback of $X^{M_2}$ lies in its operations complexitiy. In our analysis so far, we have yet to consider the number of operations or the number of jumps of the Markov chains needed to reach a certain level of approximation of the set of global minima. Recall that in Definition \ref{def:M2} we have
$$M_{2,t}(x,y) = e^{\frac{|U(x)-U(y)|}{T(t)}} M_{1,t}(x,y).$$
Up to a fixed time horizon $T$, in view of the above equation we expect that on average $X^{M_2}$ will jump much more often (meaning more function evaluations and computations of $U$) than its counterpart $X^{M_1}$. This calls for a more rigorous analysis on the operations complexity of $X^{M_2}$, and we will leave that as a future research direction. We now present a simple result comparing the probability of reaching a global minimum in a finite time $t$ while keeping both $X^{M_1}$ and $X^{M_2}$ to have the same number of jumps. For $i = 1,2$, we write $N^{M_i}(t)$ to be the number of jumps up until time $t$ of $X^{M_i}$.
\begin{proposition}
	For $i = 1,2$, let $X^{M_i} = (X^{M_i}_t)_{t \geq 0}$ be the non-homogeneous continuous-time Markov chain introduced in Definition \ref{def:M1} and \ref{def:M2} with generator $M_{i,t}$, proposal generator $Q$, target function $U$ and cooling schedule $T_i(t)$. For any $x,y \in \mathcal{X}$, let $k = k(x,y)$ be the number of steps in the shortest path from $x$ to $y$ with distinct elements, that is, $k = \min_{\gamma \in \Gamma^{x,y}} |\gamma|$, then for any $t \geq 0$,
	\begin{align}
		\mP_x\left(X^{M_2}(t) = y | N^{M_2}(t) = k\right) \geq \mP_x\left(X^{M_1}(t) = y | N^{M_1}(t) = k\right).
	\end{align}
	In particular, this result holds for any $x \in \mathcal{X}$ and $y \in U_{min}$.
\end{proposition}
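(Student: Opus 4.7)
The plan is to express both sides in path-integral form and then reduce to a pairwise comparison of path weights. For $i \in \{1,2\}$, let $\Gamma_k^x$ denote the collection of paths $\gamma = (\gamma_0,\gamma_1,\ldots,\gamma_k)$ with $\gamma_0 = x$ and consecutive distinct entries $\gamma_{j-1} \neq \gamma_j$, and let $\Gamma_k^{x,y} \subset \Gamma_k^x$ collect those ending at $\gamma_k = y$. By the standard description of a non-homogeneous pure jump process, the joint density that $X^{M_i}$ follows $\gamma$ with jump times $0 < t_1 < \cdots < t_k < t$ and no further jumps before $t$ equals
\begin{equation*}
\prod_{j=1}^k M_{i,t_j}(\gamma_{j-1},\gamma_j)\,\prod_{j=0}^{k}\exp\!\left(-\int_{t_j}^{t_{j+1}} q_i(r,\gamma_j)\,dr\right),
\end{equation*}
where $q_i(r,z) := \sum_{w \neq z} M_{i,r}(z,w)$ and $t_0 := 0$, $t_{k+1} := t$. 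Denoting by $I(\gamma;M_i)$ the integral of this density over the simplex $\{0 < t_1 < \cdots < t_k < t\}$, we obtain
\begin{equation*}
\mP_x\!\left(X^{M_i}(t) = y \,\middle|\, N^{M_i}(t) = k\right) = \frac{\sum_{\gamma \in \Gamma_k^{x,y}} I(\gamma;M_i)}{\sum_{\gamma' \in \Gamma_k^x} I(\gamma';M_i)}.
\end{equation*}
Since $k=k(x,y)$ is the minimal length, every $\gamma \in \Gamma_k^{x,y}$ is a geodesic and in particular has pairwise distinct entries, while any $\gamma' \in \Gamma_k^x \setminus \Gamma_k^{x,y}$ must revisit some state.

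Cross-multiplying, the target inequality is equivalent to
\begin{equation*}
\sum_{\gamma \in \Gamma_k^{x,y}}\sum_{\gamma' \in \Gamma_k^x \setminus \Gamma_k^{x,y}} I(\gamma;M_1)\,I(\gamma';M_1)\left(\frac{I(\gamma;M_2)}{I(\gamma;M_1)} - \frac{I(\gamma';M_2)}{I(\gamma';M_1)}\right) \geq 0,
\end{equation*}
where the diagonal pairs in $\Gamma_k^{x,y} \times \Gamma_k^{x,y}$ have already cancelled antisymmetrically. It therefore suffices to establish the pointwise weight-ratio comparison $I(\gamma;M_2)/I(\gamma;M_1) \geq I(\gamma';M_2)/I(\gamma';M_1)$ whenever $\gamma$ is a geodesic path ending at $y$ and $\gamma'$ is a length-$k$ path starting at $x$ but ending elsewhere.

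For this pointwise step I would invoke the identity $M_{2,s}(z,w) = e^{|U(z)-U(w)|/T(s)} M_{1,s}(z,w)$ from Definition \ref{def:M2}, which factors the jump-intensity product along any path as
\begin{equation*}
\prod_{j=1}^k M_{2,t_j}(\gamma_{j-1},\gamma_j) = \exp\!\left(\sum_{j=1}^k \frac{|U(\gamma_{j-1}) - U(\gamma_j)|}{T(t_j)}\right)\prod_{j=1}^k M_{1,t_j}(\gamma_{j-1},\gamma_j),
\end{equation*}
while the holding-time correction in the ratio reduces to $\prod_{j=0}^k \exp\!\left(-\int_{t_j}^{t_{j+1}} (q_2 - q_1)(r,\gamma_j)\,dr\right)$, depending only on the sequence of states visited. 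I would then match these holding-time contributions state-by-state: a geodesic $\gamma$ visits each of its $k+1$ states exactly once, while $\gamma'$ is forced to duplicate at least one state, and this duplication provides holding-time slack which, combined with the observation that $y \in U_{min}$ forces the telescoping sum $\sum_{j=1}^k |U(\gamma_{j-1})-U(\gamma_j)|$ for a geodesic to $y$ to be bounded below by $U(x)-U(y)$ and hence maximal among length-$k$ paths from $x$, allows the Peskun-type intensity gain of $M_2$ to dominate.

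The hard part will be precisely this balancing of competing effects: the intensity ratio $e^{|U(z)-U(w)|/T(s)}$ systematically favours $M_2$, whereas the holding-time factor $\exp(-\int(q_2 - q_1))$ with $q_2 \geq q_1$ (as $M_{2,s} \succeq M_{1,s}$ by Lemma \ref{lem:M1M2}) pushes in the opposite direction. Disentangling the two cleanly appears to require both the distinct-element structure of geodesics and the hypothesis $y \in U_{min}$ emphasised in the ``In particular'' clause. Should the direct pathwise comparison resist at this step, an alternative route is to appeal to the uniformization representation in Theorem \ref{thm:uniform}: couple $X^{M_1}$ and $X^{M_2}$ through a common Poisson clock of rate $M$ and shared uniform decisions at each Poisson instant, and reduce the comparison to the stochastic dominance of $\mathcal{P}^{M_2}_s$ over $\mathcal{P}^{M_1}_s$ at each step, conditional on the total number of actual state changes on $[0,t]$ being equal to $k$.
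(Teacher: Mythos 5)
Your primary route has a genuine gap, and it stems from how you interpret the conditioning event. You read $N^{M_i}(t)$ as the number of genuine state changes of $X^{M_i}$, which forces you to write the conditional probability as a \emph{ratio} of path integrals (numerator over the probability of exactly $k$ real jumps) and then to compare two ratios with different denominators. The cross-multiplied inequality you reduce to requires the pointwise claim $I(\gamma;M_2)/I(\gamma;M_1) \geq I(\gamma';M_2)/I(\gamma';M_1)$ for every geodesic $\gamma$ into $y$ and every non-$y$-terminating length-$k$ path $\gamma'$, and you do not prove it. The two ingredients you invoke for it are both unsubstantiated: the assertion that a geodesic into $y \in U_{min}$ maximizes $\sum_j |U(\gamma_{j-1})-U(\gamma_j)|$ among length-$k$ paths from $x$ is false in general (a competing path can oscillate and accumulate arbitrarily more total variation of $U$ than the telescoped lower bound $U(x)-U(y)$ you cite), and the ``holding-time slack from duplicated states'' is not quantified — indeed, since $q_2 \geq q_1$ by the Peskun ordering, the survival factors $\exp(-\int (q_2-q_1))$ work \emph{against} $M_2$ on every path, geodesic or not, so it is not clear the claimed ratio monotonicity holds at all. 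As written, the main argument does not close.

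The paper avoids all of this by taking your fallback as the main (and only) argument: it uses the uniformization of Theorem \ref{thm:uniform} with a \emph{common} Poisson clock of rate $M$ for both chains, and interprets the conditioning as ``exactly $k$ clock ticks.'' That event has probability $e^{-Mt}(Mt)^k/k!$ regardless of which chain is run, so no ratio comparison is needed: the conditional probability is exactly $\int \cdots \int \mathcal{P}_{n_1}\cdots\mathcal{P}_{n_k}(x,y)\,d\overline{H}$, the $(x,y)$ entry of the product expands over paths of length $k$, minimality of $k$ forces every factor to be an off-diagonal entry, and $\mathcal{P}^{M_2}_{s}(z,w) \geq \mathcal{P}^{M_1}_{s}(z,w)$ for $z \neq w$ (from $M_{2,s} \succeq M_{1,s}$, Lemma \ref{lem:M1M2}) gives the inequality term by term. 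If you want to salvage your version, you must either adopt this reading of $N^{M_i}(t)$ explicitly, or actually prove the ratio comparison for genuine jump counts — which is a different and harder statement that the paper does not address.
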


\begin{proof}
	We recall the notations introduced in Theorem \ref{thm:uniform}. Let
	\begin{align*}
		M &:= e^{\frac{R}{T_2(t)}} \max_x |Q(x,x)|, \\
		\mathcal{P}^{M_2}_t &:= I + \dfrac{1}{M} M_{2,t}, \\
		\mathcal{P}^{M_1}_t &:= I + \dfrac{1}{M} M_{1,t}.
	\end{align*}
	Let $\gamma^{x,y} = (\gamma^{x,y}_i)_{i=0}^k$ be the shortest path(s) from $x$ to $y$ with distinct elements. In view of Theorem \ref{thm:uniform} and \cite[Theorem $3.1$]{vDN92}, we have
	\begin{align*}
	\mP_x\left(X^{M_2}(t) = y | N^{M_2}(t) = k\right) &= \int_{0}^{t} \ldots \int_{0}^{t} \mathcal{P}^{M_2}_{n_1} \mathcal{P}^{M_2}_{n_2} \ldots \mathcal{P}^{M_2}_{n_k}(x,y) \, d \overline{H}(n_1,\ldots,n_k) \\ 
	&= \int_{0}^{t} \ldots \int_{0}^{t} \sum_{\gamma^{x,y}} \prod_{i=1}^{k} \mathcal{P}^{M_2}_{n_i}(\gamma^{x,y}_{i-1},\gamma^{x,y}_{i}) \, d \overline{H}(n_1,\ldots,n_k) \\
	&\geq \int_{0}^{t} \ldots \int_{0}^{t} \sum_{\gamma^{x,y}} \prod_{i=1}^{k} \mathcal{P}^{M_1}_{n_i}(\gamma^{x,y}_{i-1},\gamma^{x,y}_{i}) \, d \overline{H}(n_1,\ldots,n_k) \\
	&= \int_{0}^{t} \ldots \int_{0}^{t} \mathcal{P}^{M_1}_{n_1} \mathcal{P}^{M_1}_{n_2} \ldots \mathcal{P}^{M_1}_{n_k}(x,y) \, d \overline{H}(n_1,\ldots,n_k) \\
	&= \mP_x\left(X^{M_1}(t) = y | N^{M_1}(t) = k\right).
	\end{align*}
\end{proof}

Another drawback of $X^{M_2}$ is that the speed-up in the cooling schedule depends on the difference between the two hill-climbing constants $c_{M_1}$ and $c_{M_2}$. One can expect that, when the state space $\mathcal{X}$ is large, Figure \ref{fig:a} is the generic case with $c_{M_1}$ and $c_{M_2}$ being close by. The advantage of their difference may perhaps be washed away by the extra computation  cost required by $X^{M_2}$.

\section{Conclusion and future work}\label{sec:conclusion}

In this paper we study a theoretically promising and improved variant of simulated annealing that we call $X^{M_2}$. This non-homogeneous continuous-time Markov chain enjoys a few desirable properties compared with its classical counterpart $X^{M_1}$: under general cooling schedule it escapes from the local minimum in the long run. As for strong ergodicity, we provide convergence guarantee in the two cases $c_{M_2} > 0$ and $c_{M_2} \leq 0$. In the former case one can adapt a fast logarithmic cooling, while in the latter case faster than logarithmic cooling are available and the algorithm still converges to the set of global minima. In additional to these long-run guarantees, we also give finite-time convergence estimate for $X^{M_2}$.

As mentioned in the Epilogue in Section \ref{sec:epilogue}, although there are a few limitations of $X^{M_2}$, we believe this work opens the door for studying new types or variants of classical annealing algorithms. Future work includes empirical investigation of $X^{M_2}$, extending $X^{M_2}$ to time-dependent target function \cite{Lowe96}, incorporating non-reversibility to speed up mixing \cite{Bie16,CH13,L97,Hwang05, Hwang93}, investigating the analog of $X^{M_2}$ in generalized simulated annealing by designing the so-called cost function to increase the spectral gap \cite{DelMiclo99}, extending the work to more general state space and its connection with the Langevin equation \cite{GW86,CHS87,ABD01}, investigating the analogue of $X^{M_2}$ in piecewise deterministic simulated annealing \cite{Mon16} and analyzing the cycle decomposition and energy landscape of $X^{M_2}$ \cite{Trouve96}.

\subsection*{Acknowledgements}
The author would like to thank Lu-Jing Huang, Chii-Ruey Hwang and Laurent Miclo for inspiring discussions that lead to this work. He would also like to thank Zhipeng Huang and three annonyomous referees for a careful reading. In particular, the author is grateful for various valuable suggestions and comments that the referees have made. The author acknowledges the support from the Chinese University of Hong Kong, Shenzhen grant PF01001143 and the financial support from AIRS - Shenzhen Institute of Artificial Intelligence and Robotics for Society Project 2019-INT002.

%
%
%
%
%

\bibliographystyle{abbrvnat}
\bibliography{thesis}

\end{document}